\newtheorem{Theorem}{\bf Theorem}
\newtheorem{lemma}[Theorem]{\bf Lemma}
\newtheorem{proposition}[Theorem]{\bf Proposition}
\newtheorem{Corollary}[Theorem]{\bf Corollary}
\newtheorem{remark}[Theorem]{\bf Remark}
\newtheorem{theorem}[Theorem]{\bf Theorem}
\def\qed{\hfill$\Box$}
\newcommand{\be}{\begin{equation}}
\newcommand{\ee}{\end{equation}}
\def\qed{\hfill$\Box$}
\def\hpic #1 #2 {\mbox{$\begin{array}[c]{l} 
\epsfig{file=#1,height=#2}\end{array}$}}
\def\wpic #1 #2 {\mbox{$\begin{array}[c]{l} 
\epsfig{file=#1,width=#2}\end{array}$}}
\begin{document}

\title[Skein theory for subfactor planar algebras]{Universal skein theory for finite depth subfactor planar algebras}

\author{Vijay Kodiyalam}
\address{The Institute of Mathematical Sciences, Chennai, India}
\email{vijay@imsc.res.in,tsrikanth@imsc.res.in}
\author{Srikanth Tupurani}

\subjclass{Primary 46L37; Secondary 57M25}


\begin{abstract} We describe an explicit finite presentation for
a finite depth subfactor planar algebra. We also show that such planar
algebras are singly generated with the generator subject to finitely
many relations.
\end{abstract}

\maketitle

\section{Introduction}
The main result of this paper expresses
a subfactor planar algebra
of finite depth as a quotient of a universal planar algebra on finitely many
generators by a planar ideal generated by finitely many relations.
Such a presentation is often referred to as a skein theory for the
planar algebra. In addition, we also show that such a planar algebra
is generated by a single element subject to finitely
many relations.

Our presentation is universal in the following sense.
We specify a small set of `templates' for relations in any finite depth subfactor planar algebra. If $P$ is one such with depth at most $k$, taking a basis of $P_k$ to be a generating set and
specialising these templates to $P$ presents it. 

Skein theories for planar algebras have been the subject of several
studies beginning with \cite{Lnd2002} for the group subfactor planar algebra
and \cite{KdyLndSnd2003} and \cite{KdySnd2006} for irreducible
depth two planar algebras to the more recent
\cite{MrrPtrSny2008}
for the $D_{2n}$ planar algebras, \cite{Bgl2009} for a unified treatment of the ADE planar algebras, \cite{Ptr2009} for the Haagerup subfactor planar algebra and \cite{BglMrrPtrSny2009} for the
extended Haagerup subfactor planar algebra. One of the main results of each of these
papers is a nice skein theory for a finite depth subfactor planar algebra or a family of such.

The methods of this paper do not by any means give any such nice skein theories for finite depth subfactor planar algebras. The point is
to show that all such planar algebras have a skein theory, or equivalently, a finite presentation. In particular, we make no
attempt at being parsimonious with the relations. 

In Section 2 we quickly recall basic definitions and properties of
subfactor planar algebras. Section 3, which makes no mention of planar algebras, is about certain relationships between tangles that
we call templates and certain relationships between templates that
we call consequences. Section 4 gives a finite presentation of a finite
depth subfactor planar algebra. In Section 5 we make a couple of simple observations including the single generation of finite depth subfactor planar algebras.

\section{Subfactor planar algebras}

The purpose of this section is to fix our notations and conventions
regarding planar algebras. We assume that the reader is familiar
with planar algebras as in \cite{Jns1999} or in \cite{KdySnd2004} so we will
be very brief.

Planar algebras are collections of vector spaces equipped with an action by the
`coloured operad of planar tangles'.
The vector spaces are indexed by
the set $Col = \{0_+, 0_-, 1, 2, \cdots \}$, whose elements  are called colours.
We endow this set with the partial order that restricts to the usual order on ${\mathbb N}$ and such that $0_\pm$ are incomparable
and less than 1.

We will not define a tangle but merely note the following features.
Each tangle has an external box, denoted $D_0$, 
and a (possibly empty) ordered collection of 
internal non-nested boxes denoted
$D_1$, $D_2$, $\cdots$.
Each box has an even number (again possibly 
0) of points marked on its boundary. A box with $2n$ points on its boundary is called an $n$-box or said to be of colour $n$.
There is also given a collection of disjoint curves each of which is either 
closed,
or joins a marked point on one of the boxes to another such.
For each box having at least one marked point on its boundary, one
of the regions ( = connected components of the complement of the boxes and
curves) that impinge on its boundary is distinguished and marked
with a $*$ placed near its boundary.
The whole picture is to be planar and each marked point on a box must be
the end-point of one of the curves.
Finally, there is given a chequerboard shading of the regions such 
that 
the $*$-region of any box is shaded white.
A $0$-box is said to be  $0_+$ box if the region touching its boundary is 
white and a  $0_-$ box
otherwise.
A $0$ without the $\pm$ qualification will always refer to $0_+$.
A tangle is said to be an $n$-tangle if its external box is of colour $n$.
Tangles are defined only upto
a planar isotopy preserving the $*$'s, the shading and the ordering of the 
internal boxes.

We illustrate several important tangles in Figure \ref{fig:imptangles}.
\begin{figure}[!htb]
\psfrag{n}{\tiny $n$}
\psfrag{m-n+p}{\tiny $m-n+p$}
\psfrag{n-m+p}{\tiny $n-m+p$}
\psfrag{m+n-p}{\tiny $m+n-p$}
\psfrag{i}{\tiny $i$}
\psfrag{2n-i}{\tiny $2n-i$}
\psfrag{j}{\tiny $j$}
\psfrag{$ER_{n+i}^n$ : Right expectations}{$ER_{n+j}^n$ : Right expectation}
\psfrag{$ER$}{$ER_{1}^{0_-}$ : Right expectation}
\psfrag{2n}{\tiny $2n$}
\psfrag{2}{\tiny $2$}
\psfrag{D_1}{\small $D_2$}
\psfrag{D_2}{\small $D_1$}
\psfrag{$I_n^{n+1}$ : Inclusion}{$I_n^{n+j}$ : Inclusion}
\psfrag{$I_{0_-}^1$ : Inclusion}{$I_{0_-}^1$ : Inclusion}
\psfrag{$EL(i)_{n+i}^{n+i}$ : Left expectations}{$EL_{n}^{n}$ : Left expectation}
\psfrag{$M_{n,n}^n$ : Multiplication}{$M_{m,n}^{p}$ : Multiplication}
\psfrag{$TR_n^0$ : Trace}{$TR_n^0$ : Trace}
\psfrag{$R^{n+1}_{n+1}$ : Rotation}{$R_{n+1}^{n+1}$ : Rotation}
\psfrag{$1^n$ : Multiplicative identity}{$1^n$ : Mult. identity}
\psfrag{$ER_{n+1}^{n+1}$ : Right expectation}{$ER_{n+1}^{n+1}$ : Right expectation}
\psfrag{$I_n^n$ : Identity}{$I_n^n$ : Identity}
\psfrag{$E^{n+2}$ : Jones projections}{$E^{n+2}$ : Jones proj.}
\includegraphics[height=8.5cm]{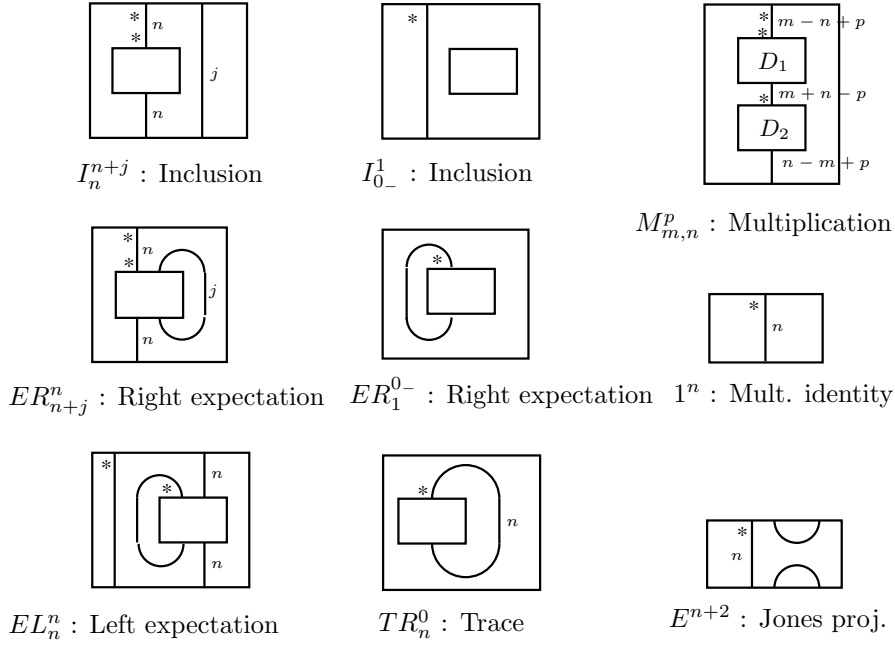}
\caption{Some important tangles ($m,n,j \geq 0, |m-n| \leq p \leq m+n$)}
\label{fig:imptangles}
\end{figure}
This figure,  uses the following notational device introduced in \cite{KdySnd2009}.
A strand in a tangle with a {\em non-negative integer}, say $t$, adjacent to it will indicate
a $t$-cable of that strand, i.e., a parallel cable of $t$ strands, in
place of the one actually drawn. 

A useful labelling convention for tangles 
is to decorate its tangle symbol,
such as $I,EL,M$ or $TR$, with subscripts and a superscript that give the colours
of its internal boxes and external box respectively. With this, we may dispense
with showing the shading, which is then unambiguously determined.

The basic operation that one can perform on tangles is substitution of one
into a box of another. If $T$ is a tangle that has some internal  boxes
$D_{i_1}, \cdots, D_{i_j}$ of colours $n_{i_1}, \cdots, n_{i_j}$ and if
$S_1, \cdots, S_j$ are arbitrary tangles of colours $n_{i_1}, \cdots, 
n_{i_j}$, then we may substitute $S_t$ into the box $D_{i_t}$ of $T$ for 
each $t$ - such that the `$*$'s match' - to get a new tangle that will be 
denoted $T \circ_{(D_{i_1}, \cdots,D_{i_j})}
(S_1, \cdots, S_j)$.
The collection of tangles along with the substitution operation is called
the coloured operad of planar tangles.

A planar algebra $P$ is an algebra over the coloured operad of planar tangles.
By this, is meant the following: $P$ is a collection $\{P_n\}_{n \in Col}$ of vector spaces
and linear maps $Z_T^P : P_{n_1} \otimes P_{n_2} \otimes \cdots \otimes 
P_{n_b} 
\rightarrow P_{n_0}$ for each $n_0$-tangle $T$ with internal boxes of colours
$n_1,n_2, \cdots,n_b$. The collection of maps is to be `compatible with 
substitution of tangles and renumbering of internal boxes' in an obvious 
manner.
For a planar algebra $P$, each $P_n$ acquires the structure of an
associative, unital algebra with multiplication defined using the
tangle $M_{n,n}^n$ and unit defined to be $1_n = Z^P_{1^n}(1)$.

Among planar algebras, the ones that we will be interested in are the
subfactor planar algebras.
These are complex, finite-dimensional and connected in the sense that each $P_n$ is a 
finite-dimensional complex vector space and $P_{0_\pm}$ are one dimensional.
They have a positive modulus $\delta$, meaning that closed loops in a 
tangle $T$ contribute
a multiplicative factor of $\delta$ in $Z_T^P$.
They are spherical in that for a $0$-tangle $T$, the function $Z_T^P$ is not 
just
planar isotopy invariant but also an isotopy invariant of the tangle regarded
as embedded on the surface of the two sphere.
Further, each $P_n$ is a $C^*$-algebra in such a way that for an $n_0$-tangle 
$T$ with internal boxes of colours
$n_1,n_2, \cdots,n_b$ and for $x_i \in P_{n_i}$, the equality
$Z_T^P(x_1 \otimes \cdots \otimes x_b)^* ~=~
Z_{T^*}^P(x_1^* \otimes \cdots \otimes x_b^*)$ holds,
where $T^*$ is the adjoint of the tangle $T$ - which, by definition, is 
obtained from $T$ by reflecting it.
Finally, the trace $\tau : P_n \rightarrow {\mathbb C} = P_{0}$ 
defined by:
\begin{eqnarray*}
\tau(x) ~=~  \delta^{-n} Z^P_{TR_n^{0}}(x)
\end{eqnarray*}
is postulated to be a faithful, positive (normalised) trace for each $n \geq 0$.

Any subfactor planar algebra $P$ (of modulus $\delta$) contains the distinguished Jones projections
$e_n \in P_n$ for $n \geq 2$ defined by $e_n = \delta^{-1}Z_{E^n}^P(1)$
and their non-normalised versions $E_n = Z_{E^n}^P(1)$.
A subfactor planar algebra $P$ is said to be of finite depth if there is
a positive integer $k$ such that $P_{k+1} = P_kE_{k+1}P_k$ and the
smallest such $k$ is said to be the depth of $P$. 


The following proposition is well-known. We only give a proof for
completeness and since it is completely planar-algebraic. Note the absence of any assumptions on the planar algebra.

\begin{proposition}\label{tensor}
Let $P$ be any planar algebra and suppose that for
some positive integer $k$, $1_{k+1} \in P_kE_{k+1}P_k$. For all $m,n \geq k$, there is an isomorphism of $P_{k-1}-P_{k-1}$-bimodules, $$P_m \otimes_{P_{k-1}} P_n \cong P_{m+n-(k-1)}.$$
\end{proposition}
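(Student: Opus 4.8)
The plan is to build the isomorphism $P_m \otimes_{P_{k-1}} P_n \to P_{m+n-(k-1)}$ explicitly using the multiplication-type tangle $M_{m,n}^{m+n-(k-1)}$, and to construct an inverse using the hypothesis $1_{k+1} \in P_k E_{k+1} P_k$, suitably iterated. First I would define a map $\Phi : P_m \times P_n \to P_{m+n-(k-1)}$ by $\Phi(x,y) = Z^P_{M_{m,n}^{m+n-(k-1)}}(x \otimes y)$, the tangle that glues an $m$-box and an $n$-box along $k-1$ pairs of strands (the ``partial multiplication'' with $p = m+n-(k-1)$, which is legitimate since $|m-n| \le m+n-(k-1) \le m+n$ holds once $m,n \ge k-1$). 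I would check that $\Phi$ is $P_{k-1}$-balanced — i.e. $\Phi(x z, y) = \Phi(x, z y)$ for $z \in P_{k-1}$ — which is immediate from the operadic composition axiom, since inserting $z$ (via the inclusion tangle $I_{k-1}^m$ on the right of $x$, or via $I_{k-1}^n$ on the left of $y$) produces isotopic tangles. This gives a well-defined $P_{k-1}$-$P_{k-1}$-bimodule map $\Phi : P_m \otimes_{P_{k-1}} P_n \to P_{m+n-(k-1)}$; surjectivity is easy because $\Phi(x \otimes 1_n)$ already recovers (a rescaling of) the inclusion of $P_m$ into $P_{m+n-(k-1)}$, and more generally one sees the image is a two-sided ideal containing the identity.

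The substance is injectivity, equivalently producing a two-sided inverse. Here I would use the hypothesis. The condition $1_{k+1} \in P_k E_{k+1} P_k$ says precisely that the ``conditional-expectation-then-include'' tangle is, up to the element realizing $1_{k+1}$ as $\sum_i a_i E_{k+1} b_i$, an isomorphism onto $P_{k+1}$ at the first step; concretely it gives elements $a_i, b_i \in P_k$ with $\sum_i Z^P_{\text{incl}}(a_i)\, E_{k+1}\, Z^P_{\text{incl}}(b_i) = 1_{k+1}$. Cabling/iterating this relation $(m-k)$ times on one side and $(n-k)$ times on the other — i.e. capping off strands with nested Jones projections — yields a formula expressing $1_{m+n-(k-1)}$ as a finite sum $\sum_i \Phi(\alpha_i \otimes \beta_i)$ with $\alpha_i \in P_m$, $\beta_i \in P_n$. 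The inverse map $\Psi : P_{m+n-(k-1)} \to P_m \otimes_{P_{k-1}} P_n$ is then $w \mapsto \sum_i (\alpha_i \cdot w') \otimes \beta_i$ where $w'$ is obtained from $w$ by the appropriate ``unzipping'' tangle that splits $w$ across the middle $k-1$ strands; one checks $\Phi \circ \Psi = \mathrm{id}$ directly from the defining relation and $\Psi \circ \Phi = \mathrm{id}$ by a diagrammatic isotopy using that the $\alpha_i, \beta_i$ were built from Jones projections, which satisfy the $E_j E_{j\pm1} E_j = \delta^{-?} E_j$ relations needed to collapse the picture.

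The main obstacle I anticipate is organizing the iterated cabling cleanly: the single relation $1_{k+1} \in P_k E_{k+1} P_k$ has to be promoted to a statement about $P_{m+n-(k-1)}$ for \emph{all} $m,n \ge k$ simultaneously, and getting the bookkeeping of which strands are being multiplied, which are being ``expected down'', and which indices $i$ range over, to line up with the tangle $M_{m,n}^{m+n-(k-1)}$ requires care. A clean way to structure this is to first prove the case $n = k$ (and $m$ arbitrary $\ge k$) — showing $P_m \otimes_{P_{k-1}} P_k \cong P_{m+1}$, which is essentially a reformulation of the finite-depth condition after one cabling — and then induct on $n$, at each stage using associativity of the operad composition $M^{m+n-(k-1)} \circ (\mathrm{id}, M)$ to reduce $P_m \otimes_{P_{k-1}} P_n$ to $(P_m \otimes_{P_{k-1}} P_{n-1}) \otimes_{P_{k-1}} P_k$. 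The sphericity and the faithful trace $\tau$ are not needed for this proposition (as the remark before the statement emphasizes), so the proof should be purely diagrammatic.
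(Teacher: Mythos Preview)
Your setup matches the paper's: $\Phi = Z^P_{M_{m,n}^{m+n-(k-1)}}$ is $P_{k-1}$-balanced and descends to the relative tensor product. Two issues, though. First, surjectivity: the image of $\Phi$ is not a two-sided ideal of $P_{m+n-(k-1)}$ in any evident way (it is only a $P_{k-1}$-sub-bimodule), so ``ideal containing $1$'' does not go through. The paper instead iterates $1_{k+1}\in P_kE_{k+1}P_k$ to obtain $P_N = P_kE_{k+1}\cdots E_NP_k\cdots P_kE_{k+1}P_k$ for all $N\ge k$; pictorially this says a certain stacked tangle $T^N$ (with $N-k+1$ internal $k$-boxes glued along $k-1$ strands each) has $Z^P_{T^N}$ surjective, and since $M_{m,n}^{m+n-(k-1)}\circ(T^m,T^n)=T^{m+n-(k-1)}$, surjectivity of $\Phi$ follows. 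You gesture at this later, so drop the ideal argument.

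The real gap is injectivity. Your formula $\Psi(w)=\sum_i(\alpha_i\cdot w')\otimes\beta_i$ does not typecheck: with $\alpha_i\in P_m$ and $\beta_i\in P_n$ you need $\alpha_i\cdot w'\in P_m$, but there is no ``unzipping'' tangle that splits the single box $w\in P_{m+n-(k-1)}$ into two pieces --- a box is indivisible in a planar algebra. The paper avoids constructing an explicit inverse and instead shows directly that $\ker\Phi$ lies in the balanced-tensor relations. From surjectivity (applied in the case $m=n$) choose $x_i,y_i\in P_n$ with $\sum_i\Phi(x_i\otimes y_i)=1_{2n-(k-1)}$; a short tangle computation then yields the Pimsner--Popa-type identity $v=\sum_i E(vx_i)\,y_i$ for every $v\in P_n$, where $E=Z^P_{ER_n^{k-1}}:P_n\to P_{k-1}$ is the conditional expectation. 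Given $\sum_j u_j\otimes v_j\in\ker\Phi$, one rewrites
\[
\sum_j u_j\otimes v_j \;=\; \sum_{i,j}\Bigl(u_j\otimes E(v_jx_i)\,y_i \;-\; u_j\,E(v_jx_i)\otimes y_i\Bigr),
\]
the first double sum equalling the left side by the identity just proved, and the second vanishing because for each fixed $i$ the quantity $\sum_j u_j\,E(v_jx_i)$ is a tangle applied to $\Phi\bigl(\sum_j u_j\otimes v_j\bigr)=0$. The right-hand side is visibly a sum of terms $uz\otimes y-u\otimes zy$ with $z=E(v_jx_i)\in P_{k-1}$, hence dies in $P_m\otimes_{P_{k-1}}P_n$. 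Note that no Temperley--Lieb relations $E_jE_{j\pm1}E_j=E_j$ are involved; the entire mechanism is the conditional expectation onto $P_{k-1}$ together with the definition of the relative tensor product.
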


\begin{proof}
Consider the 
tangles $T^n$ defined for $n \in Col$ as in Figure
\ref{tl}.
In this and all subsequent tangle figures, we suppress drawing the external
box of tangles and adopt the convention that the $*$ of the
external box (if it is a $k$-box with $k >0$) is at the top left corner.
Shaded regions of a tangle will be to the left traversing any string
along the direction indicated on it.
\begin{figure}[!h]
\begin{center}
\psfrag{1}{\large $D_1$}
\psfrag{2}{\large $D_2$}
\psfrag{l}{\large $n$}
\psfrag{lmkp1}{\large $D_{n-k+1}$}
\psfrag{k-1}{\large $k-1$}
\psfrag{k-l}{\large $k-n$}
\psfrag{v}{\huge $\vdots$}
\psfrag{=}{\huge $=$}
\resizebox{5.5cm}{!}{\includegraphics{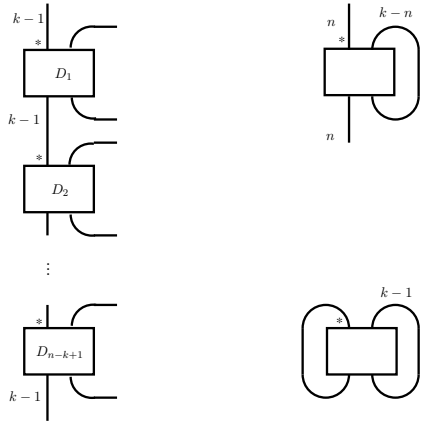}}
\end{center}
\caption{The tangles $T^n$ for $n \geq k$, $0 \leq n < k$ and $n=0_-$}
\label{tl}
\end{figure}
The tangle $T^n$ is an $n$-tangle with $n-k+1$ internal boxes for $n \geq k$ and
1 internal box for $n < k$, all of colour $k$. Note that for
$n < k$, $T^n = ER_k^n$ while $T^k = I^k_k$.

From $1_{k+1} \in P_kE_{k+1}P_k$ we see easily -  see the technique of proof of Lemma 5.7
of \cite{KdyLndSnd2003} - that for all $n \geq k$,
$P_{n+1} = P_kE_{k+1}E_{k+2}\cdots E_{n+1}P_n$ and then by
induction that $P_{n+1} = P_kE_{k+1}E_{k+2}\cdots E_{n+1}P_kE_{k+1}E_{k+2}\cdots E_{n}P_k \cdots \cdots  P_kE_{k+1}P_k$. Expressed pictorially, this
yields the surjectivity of $Z_{T^n}^P$ for all $n \geq k$.

Now consider the 
tangle $M = M_{m,n}^{m+n-(k-1)}$.
Thus $Z_M^P : P_m \otimes P_n \rightarrow P_{m+n-(k-1)}$ and
a little thought shows that this is a $P_{k-1}-P_{k-1}$-bimodule map that factors through $P_m \otimes_{P_{k-1}} P_n$. Surjectivity of this map follows from the tangle equation  $M\circ_{(D_1,D_2)}(T^m,T^n) = T^{m+n-(k-1)}$.

The proof of injectivity uses the tangles $W = W^n_{n,2n-k+1}$ and
$W^*$ of Figure \ref{tanglew}.
\begin{figure}[!h]
\begin{center}
\psfrag{D1}{\large $D_1$}
\psfrag{D2}{\large $D_2$}
\psfrag{l}{\large $n$}
\psfrag{k-1}{\large $k-1$}
\psfrag{n-k+1}{\large $n-k+1$}
\psfrag{n}{\large $n$}
\resizebox{6cm}{!}{\includegraphics{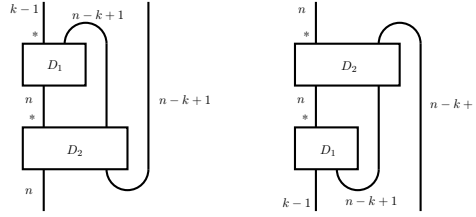}}
\end{center}
\caption{The tangles $W$ and $W^*$}
\label{tanglew}
\end{figure}
First use the surjectivity above for $m=n$ to conclude
that there exist $x_i,y_i \in P_n$, for $i \in I$ - a finite set - such that 
$1_{2n-(k-1)} = \sum_{i \in I} Z_M^P(x_i \otimes y_i)$. 
Hence, for any $v \in P_n$, $Z^P_W(v,1_{2n-(k-1)}) = \sum_{i \in I} Z_{W \circ_{D_2} M}^P(v \otimes x_i \otimes y_i)$. Equivalently, for all
%
%
%
%
$v \in P_n$, we have $v = \sum_{i \in I} Z_{ER_n^{k-1}}(vx_i)y_i$. 

Now, we claim that
if $\sum_{j \in J} u_j \otimes v_j \in ker(Z_M^P)$, then,
$$
\sum_{j \in J} u_j \otimes v_j = \left( \sum_{i \in I,j \in J} u_j \otimes Z_{ER_n^{k-1}}(v_jx_i)y_i\right) - \left( \sum_{i \in I,j \in J} u_j  Z_{ER_n^{k-1}}  (v_jx_i) \otimes y_i\right).
$$
In fact, the left hand side equals the first term on the right hand side 
while the second term on the right vanishes since for each $i \in I$, the sum $ \sum_{j \in J} u_j  Z_{ER_n^{k-1}}  (v_jx_i)$ is of the form $Z^P_{W^* \circ_{D_2} M}(x_i \otimes \sum_{j \in J} u_j \otimes v_j)
= Z^P_{W^*}(x_i \otimes Z^P_M(\sum_{j \in J} u_j \otimes v_j)) = 0.
$ 

The displayed equation expresses $
\sum_{j \in J} u_j \otimes v_j$ as an element in the kernel of the natural map $P_m \otimes P_n \rightarrow P_m \otimes_{P_{k-1}} P_n$
and concludes the proof.
\end{proof}
 
We will need the following corollary whose proof follows easily by
induction using Proposition \ref{tensor}. 
\begin{Corollary}\label{cor:tensor}
Let $P$ be any planar algebra and suppose that for
some positive integer $k$, $1_{k+1} \in P_kE_{k+1}P_k$.
Then, for all $n \geq k$ there is a $P_{k-1}-P_{k-1}$-bimodule
isomorphism
$$
P_k \otimes_{P_{k-1}} P_k \otimes_{P_{k-1}} \cdots \otimes_{P_{k-1}}
P_k \cong P_n,
$$
where there are $n+1-k$ $P_k$'s on the left.\qed
\end{Corollary}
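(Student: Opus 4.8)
The plan is to induct on $n \geq k$, with base case $n = k$ being trivial (the left side has a single factor $P_k$ and both sides equal $P_k$ as $P_{k-1}$-$P_{k-1}$-bimodules). For the inductive step, suppose the claimed isomorphism holds for some $n \geq k$, so that there is a $P_{k-1}$-$P_{k-1}$-bimodule isomorphism between the $(n+1-k)$-fold relative tensor power of $P_k$ and $P_n$. I would then apply $- \otimes_{P_{k-1}} P_k$ to both sides: since tensoring with the fixed bimodule $P_k$ over $P_{k-1}$ is functorial, it carries the inductive isomorphism to a $P_{k-1}$-$P_{k-1}$-bimodule isomorphism between the $(n+2-k)$-fold relative tensor power of $P_k$ and $P_n \otimes_{P_{k-1}} P_k$.

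Next I would invoke Proposition \ref{tensor} with $m = n$ and the second factor of colour $k$: the hypothesis $1_{k+1} \in P_k E_{k+1} P_k$ is exactly what is assumed, $n \geq k$ and $k \geq k$, so Proposition \ref{tensor} gives a $P_{k-1}$-$P_{k-1}$-bimodule isomorphism $P_n \otimes_{P_{k-1}} P_k \cong P_{n + k - (k-1)} = P_{n+1}$. Composing this with the isomorphism from the previous paragraph yields a $P_{k-1}$-$P_{k-1}$-bimodule isomorphism between the $(n+2-k)$-fold relative tensor power of $P_k$ and $P_{n+1}$, which is precisely the claim for $n+1$ (note that $(n+1) + 1 - k = n + 2 - k$, so the count of $P_k$ factors is correct). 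This completes the induction.

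The only point requiring a little care — and the closest thing to an obstacle — is the bookkeeping of the relative tensor products, namely checking that tensoring the associative chain $P_k \otimes_{P_{k-1}} \cdots \otimes_{P_{k-1}} P_k$ on the right by one more copy of $P_k$ over $P_{k-1}$ genuinely adds one more $P_k$ to the list and that the resulting bimodule structure is the correct $P_{k-1}$-$P_{k-1}$ one. This is routine homological algebra: relative tensor product over a fixed algebra is associative up to canonical bimodule isomorphism, and the functor $- \otimes_{P_{k-1}} P_k$ preserves bimodule maps, so no genuine difficulty arises. Since $P_k$ is a (unital, associative, finite-dimensional) algebra containing $P_{k-1}$ — here using only that the inclusion tangle gives $P_{k-1}$ the structure of a subalgebra of $P_k$ — all the tensor products in sight are well-defined, and the induction goes through with no further input beyond Proposition \ref{tensor} itself. \qed
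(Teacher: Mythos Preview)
Your proof is correct and follows exactly the route the paper indicates: the corollary is stated with a \qed\ and the remark that it ``follows easily by induction using Proposition \ref{tensor},'' which is precisely the induction you carry out. One small quibble: you parenthetically call $P_k$ finite-dimensional, but the corollary is stated for an arbitrary planar algebra with no such hypothesis---this assumption is neither present nor needed for the argument.
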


\section{Templates and consequences}

A template is an ordered pair of tangles $(S,T)$ of the same colour but will be written as  a tangle implication $S \Rightarrow T$.
Given any set of templates, we will be interested in their `consequences' which are by definition those that can be obtained from 
them using (i) `reflexivity' (ii) `transitivity' and (iii) `composition on the outside', i.e., elements
of the smallest set of templates containing the original set and such that (i) all $T\Rightarrow T$ are in the set, (ii) if $S \Rightarrow T$ and $T \Rightarrow V$ are in the set, so is
$S \Rightarrow V$, and (iii) if $W$ is an arbitrary $(n_0;n_1,\cdots,n_b)$ tangle and $S_i \Rightarrow T_i$ are in the set with colour $n_i$, then, $W \circ_{(D_1,\cdots,D_b)}(S_1,\cdots,S_b) \Rightarrow W \circ_{(D_1,\cdots,D_b)}(T_1,\cdots,T_b)$ is also in the set.
%

%


%



For this paper we need a particular collection of templates
shown in Figure \ref{templates} which we will refer to as the basic templates. Here $k$ is a fixed positive integer.
\begin{figure}[!h]
\begin{center}
\psfrag{M}{\Huge Modulus: $C^{0_\pm} \Leftrightarrow 1^{0_\pm}$}
\psfrag{I}{\Huge Identity: }
\psfrag{IN}{\Huge Inclusion: }
\psfrag{J}{\Huge Jones proj.: $I_{n}^k \circ E^{n} \Rightarrow I_k^k$}
\psfrag{MU}{\Huge Multiplication: $M_{k,k}^k \Rightarrow I_k^k$}
\psfrag{CE}{\Huge Cond. exp.: $I_{k-1}^k \circ E_{k}^{k-1} \Rightarrow I_k^k$}
\psfrag{DE}{\Huge Depth: $1^{k+1} \Rightarrow T^{k+1}$}
\psfrag{SH}{\Huge Shift: $SH_{k}^{k+2} \Rightarrow T^{k+2}$}
\psfrag{=}{\huge $\Rightarrow$}
\psfrag{eq}{\huge $\Leftrightarrow$}
\psfrag{k}{\large $k$}
\psfrag{k+1}{\large $k+1$}
\psfrag{l}{\large $n$}
\psfrag{l-1}{\large $n-2$}
\psfrag{k-1}{\large $k-1$}
\psfrag{k-l-1}{\large $k-n$}
\psfrag{del}{\huge $1^{0_-}$}
\psfrag{del-}{\huge $1^{0_+}$}
\resizebox{12.5cm}{!}{\includegraphics{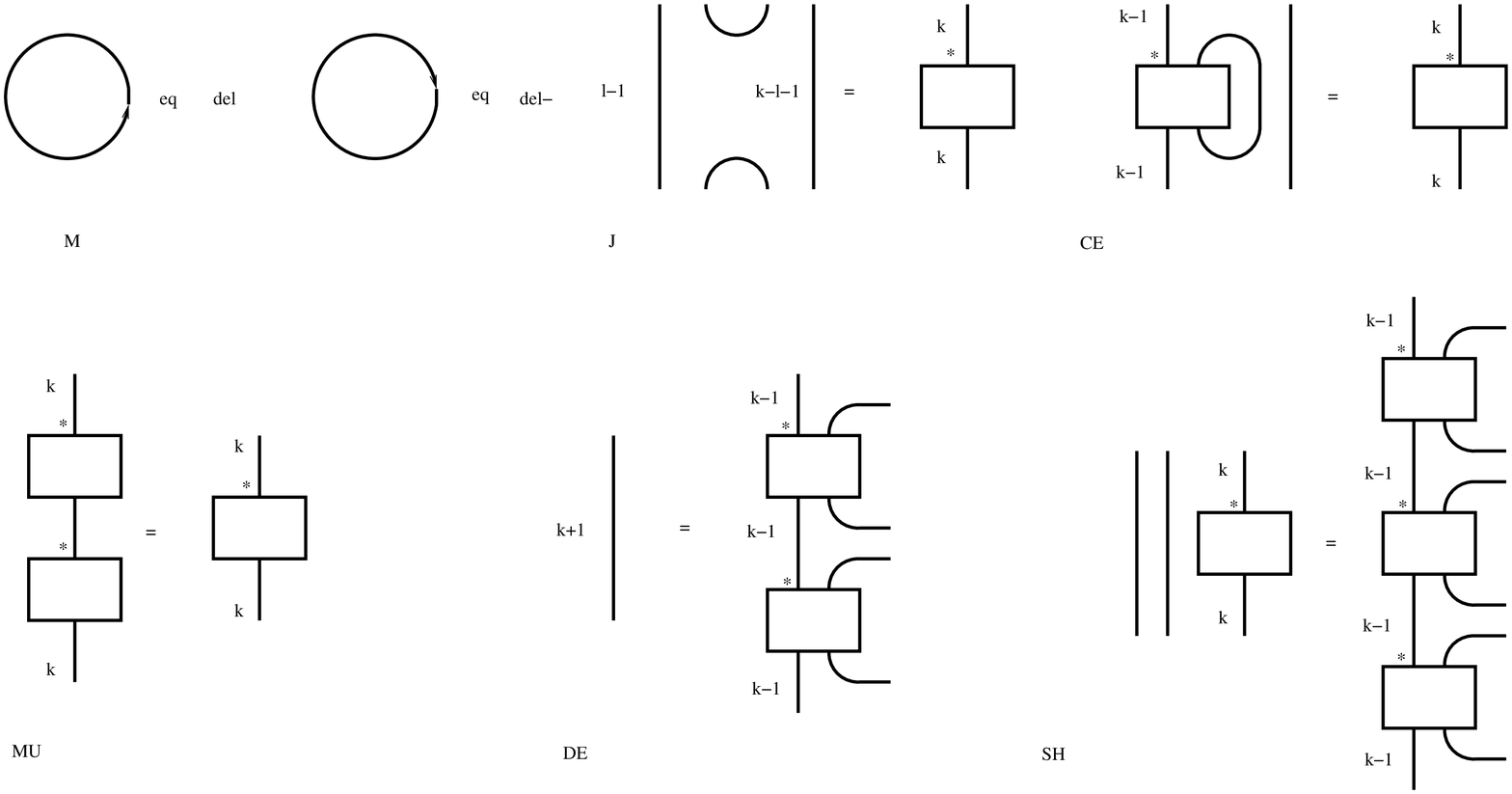}}
\end{center}
\caption{The basic templates ($2 \leq n \leq k$ for the Jones projections)}
\label{templates}
\end{figure}
Note that Figure \ref{templates} names each of the templates, shows them as tangle implications, and in the process, defines some tangles.

We begin with a simple but very useful lemma which we will refer to
later as `removing loops'.

\begin{lemma}\label{remove}
Let $S \Rightarrow T$ be any template such that the tangle $S$ has a contractible loop somewhere in it and let $\tilde{S}$ be $S$ with
the loop removed. The modulus templates together with
$S \Rightarrow T$ have as consequence $\tilde{S} \Rightarrow T$.
\end{lemma}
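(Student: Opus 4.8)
The plan is to produce $\tilde S \Rightarrow T$ as a consequence of the modulus templates together with $S \Rightarrow T$ by using rule (iii), composition on the outside, applied to a suitably chosen ``ambient'' tangle. First I would isolate the contractible loop inside $S$: since the loop is contractible, it bounds a disc in the plane that meets no box and no other strand of $S$, so we may assume (up to planar isotopy) that the loop sits inside a small $0_+$ or $0_-$ region of $S$. Let $\varepsilon \in \{0_+,0_-\}$ be the shading of that region. Then $S$ can be written as $W \circ_{D_1}(C^{\varepsilon})$ for some tangle $W$ having one internal box of colour $\varepsilon$ and whatever other internal boxes $S$ already had, where $C^{\varepsilon}$ is the $0_\varepsilon$-tangle consisting of a single loop (this is exactly the tangle named $C^{0_\pm}$ in the modulus template). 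Correspondingly $\tilde S = W \circ_{D_1}(1^{\varepsilon})$, since $1^{\varepsilon}$ is the empty $0_\varepsilon$-tangle.

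Next I would invoke the modulus template, which in the $\Leftrightarrow$ form gives in particular $1^{\varepsilon} \Rightarrow C^{\varepsilon}$ as one of its directions; by reflexivity the other internal boxes $D_2,\dots$ of $W$ carry the trivial templates $S_i \Rightarrow S_i$. Applying composition on the outside (rule (iii)) to the tangle $W$ with these input templates yields
\[
\tilde S \;=\; W \circ (1^{\varepsilon}, S_2, \dots) \;\Rightarrow\; W \circ (C^{\varepsilon}, S_2, \dots) \;=\; S.
\]
So $\tilde S \Rightarrow S$ is a consequence. Combining this with the given $S \Rightarrow T$ via transitivity (rule (ii)) gives $\tilde S \Rightarrow T$, as desired.

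The only real subtlety — and what I would expect to be the main thing to check carefully — is the geometric claim that a contractible loop in a planar tangle can always be exhibited as the image of the box $D_1$ under a substitution $S = W\circ_{D_1}(C^\varepsilon)$; that is, that the disc bounded by the loop can be taken to contain no boxes and no other strands, so that replacing that disc's contents (a lone loop) by an empty disc is exactly the operad substitution turning $S$ into $\tilde S$. This follows because ``contractible'' is meant precisely in this sense (a loop bounding an innermost empty disc), so removing it is a legitimate operadic operation; one should just note that the shading $\varepsilon$ of the region immediately inside the loop is well-defined and matches the colour of $C^\varepsilon$, and that the $*$-matching condition in the substitution is vacuous for $0$-boxes. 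With that observed, the argument is just the three closure rules applied in the order: modulus (direction $1^\varepsilon \Rightarrow C^\varepsilon$), reflexivity on the remaining boxes, composition on the outside to get $\tilde S \Rightarrow S$, then transitivity with $S \Rightarrow T$.
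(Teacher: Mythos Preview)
Your argument is correct and essentially identical to the paper's: write $S = W \circ (C^{\varepsilon})$ by replacing the loop with a $0_\varepsilon$-box, use the modulus template $1^{\varepsilon} \Rightarrow C^{\varepsilon}$ together with outside composition to get $\tilde S \Rightarrow S$, then apply transitivity. The only cosmetic difference is that the paper numbers the new box $D_{b+1}$ rather than $D_1$; note also that $\varepsilon$ should consistently be the shading of the region \emph{outside} the loop (as you first said), not the region inside it.
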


\begin{proof}
Suppose that the contractible loop of $S$ lies in a white region.
Let $W$ be the tangle obtained from $S$ by replacing the contractible
loop with a $0_+$ box numbered $b+1$, where $S$ has $b$ internal
boxes. Then it is clear that $S = W \circ_{D_{b+1}}(C^{0_+})$ while
$\tilde{S} = W \circ_{D_{b+1}}(1^{0_+})$. Since the modulus tangle
gives $1^{0_+} \Rightarrow C^{0_+}$, by composing on the outside
with $W$, we get $\tilde{S} \Rightarrow S$ and so by transitivity
$\tilde{S} \Rightarrow T$. A similar proof applies when the loop lies
in a black region.
\end{proof}

The main result of this section is an omnibus theorem listing various consequences of the templates of Figure \ref{templates}.
While all the consequences are written as tangle implications, we emphasise
that the proofs are purely pictorial.
Recall the tangles $T^n$ defined for $n \in Col$  in Figure
\ref{tl}.

\begin{theorem}\label{consequences}
The following templates are all consequences of the basic templates of Figure \ref{templates}.
\begin{enumerate}
\item $1^k \Rightarrow T^k$.
\item $I_k^{k+1} \Rightarrow T^{k+1}$.
\item For all $n \in Col$, $ER^n_{n+1} \circ T^{n+1} \Rightarrow T^{n}$.
\item For any $n \geq k$, $I_n^{n+1} \circ T^n \Rightarrow T^{n+1}$.
\item For any $n \geq k$, $I_k^n \Rightarrow T^n$ and $1^n \Rightarrow T^n$.
\item For any $n \geq k$, $M^n_{n,n} \circ_{(D_1,D_2)}(T^{n}, T^n)\Rightarrow T^{n}$.
\item $1^{0_\pm} \Rightarrow T^{0_\pm}$ and for any $n \geq 2$, $E^n \Rightarrow T^n$.
\item For any $n \geq k$ and any Temperley-Lieb tangle $Q^n$, $Q^n \Rightarrow T^n$.
\item For any $n \geq k$, $SH_n^{n+2} \circ T^n \Rightarrow T^{n+2}$.
\item For any $n \geq 1$, $EL_n^n \circ T^n \Rightarrow T^{n}$.
\item For all $n \in Col$, $I_n^{n+1} \circ T^n \Rightarrow T^{n+1}$.
\item For all $n \in Col$, $M^n_{n,n} \circ_{(D_1,D_2)}(T^{n}, T^n)\Rightarrow T^{n}$.
\end{enumerate}
\end{theorem}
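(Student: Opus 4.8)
The plan is to establish the twelve templates essentially in the order listed, using each earlier item as an ingredient in the later ones, so that the whole theorem is one long bootstrap from the basic templates of Figure \ref{templates}. All arguments are pictorial: at each stage I have a tangle on the left of an implication, I recognise it as $W \circ_{(D_1,\dots,D_b)}(S_1,\dots,S_b)$ for some outer tangle $W$ and some $S_i$ that are left-hand sides of templates already proven, and then composition on the outside plus transitivity upgrades it. The recurring moves are: (a) \emph{remove loops} via Lemma \ref{remove} whenever composing tangles creates a contractible circle; (b) read off a tangle identity like $M \circ_{(D_1,D_2)}(T^m,T^n) = T^{m+n-(k-1)}$ (already used in Proposition \ref{tensor}) and feed it through reflexivity-then-composition; (c) for the items indexed by $n \in Col$ with $n$ small, fall back on the fact that $T^n = ER_k^n$ for $n<k$ and $T^k = I_k^k$, so that the "Identity'', "Inclusion'', and "Cond.\ exp.'' basic templates do the work directly.

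Concretely: item (1) $1^k \Rightarrow T^k$ is immediate since $T^k = I_k^k$ and the basic Identity template is $I_k^k \Leftrightarrow 1^k$ (or however Figure \ref{templates} phrases it); item (2) comes from the basic Inclusion template composed so as to land on $T^{k+1}$, using $I_k^{k+1} = T^{k+1}$ or a one-box identity. Item (3), $ER^n_{n+1} \circ T^{n+1} \Rightarrow T^n$, is the key \emph{reduction step}: for $n \geq k$ it follows because capping off the top box of the picture defining $T^{n+1}$ with a right conditional expectation literally produces $T^n$ after removing a loop, so it is a tangle identity modulo Lemma \ref{remove}; for $n < k$ one uses $T^n = ER_k^n$ and the tower relation $ER_{n+1}^n \circ ER_k^{n+1} = ER_k^n$. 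Item (4), $I_n^{n+1} \circ T^n \Rightarrow T^{n+1}$ for $n \geq k$, is the \emph{build-up step}, proven by induction on $n$: the base case $n=k$ is item (2), and the inductive step writes $I_{n+1}^{n+2} \circ T^{n+1}$ as an outer tangle applied to $I_n^{n+1}\circ T^n$, invoking the Depth and Shift basic templates to convert the newly exposed strands into the correct $T$-shape. Item (5) iterates (4) (and uses (1) for the $1^n \Rightarrow T^n$ half); items (6) and (12) are the multiplication identity $M^n_{n,n}\circ(T^n,T^n) = T^{2n-(k-1)}$ followed by (5) to collapse $2n-(k-1)$ back down to $n$ via repeated item (3); item (7) handles the colours $0_\pm$ (where $T^{0_\pm}$ is built from $ER$ and the Modulus template removes the resulting closed loops) and, for $n \geq 2$, recognises a Jones-projection tangle $E^n$ as an outer tangle applied to the Jones-projection basic template's left side, then uses (5). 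Item (8) is then "any Temperley--Lieb tangle $Q^n \Rightarrow T^n$'': since every TL tangle is a product of Jones projections and identities (and scalars from closed loops), combine (5), (6), (7) and Lemma \ref{remove}. Item (9), $SH_n^{n+2}\circ T^n \Rightarrow T^{n+2}$, is a direct application of the Shift basic template composed on the outside, again with a loop removal. Items (10) ($EL_n^n \circ T^n \Rightarrow T^n$) and (11) ($I_n^{n+1}\circ T^n \Rightarrow T^{n+1}$ for \emph{all} $n \in Col$, extending (4) to the small colours) are mopping-up: the left conditional expectation is handled by a rotation-conjugate of the Cond.\ exp.\ template, and the small-$n$ inclusions by the Inclusion basic template together with $T^n = ER_k^n$.

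The main obstacle I expect is item (4) (equivalently its all-colours companion (11)), because that is where the \emph{Depth} and \emph{Shift} basic templates have to be deployed in just the right geometric position: $T^{n+1}$ has $n+1-k$ internal $k$-boxes arranged in a staircase, and one must see precisely how attaching an inclusion tangle to $T^n$'s $n$ external strands, then applying Depth to create the new $E_{n+1}$ and Shift to slide it into place, reproduces that staircase with one more step --- all up to isotopy and contractible loops. Getting the induction's outer tangle $W$ written down correctly, so that $W$ applied to (the left side of) the inductive hypothesis equals $I_{n+1}^{n+2}\circ T^{n+1}$ on the nose, is the delicate part; everything else is bookkeeping with Lemma \ref{remove} and transitivity. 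A secondary, purely cosmetic annoyance is keeping the $0_\pm$ colours straight in items (3) and (7), since there $T^{0_\pm}$ is defined by a separate picture and the modulus template is the only tool available.
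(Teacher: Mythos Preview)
Your overall bootstrap architecture is right, but you have misidentified both where the genuine difficulties lie and which basic templates do the work at each step.

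First, several of your concrete claims are wrong. In (3) for $n\geq k$, capping $T^{n+1}$ with $ER^n_{n+1}$ does \emph{not} give $T^n$ plus a loop; depending on the parity of $n-k$ it produces $T^n$ with one internal box replaced either by a product of two $k$-boxes or by a $k$-box with one side capped, and you must invoke the multiplication template or the conditional expectation template respectively to reduce that insert to $I_k^k$. In (4) neither Depth nor Shift is used: the parity-case analysis shows $I_n^{n+1}\circ T^n$ equals either $W\circ I_k^{k+1}$ (even case, use (2)) or $T^{n+1}\circ_{D_{t+2}}1^k$ (odd case, use (1)). In (6) your displayed identity does not typecheck; the correct tangle equation is $M^n_{n,n}\circ(T^n,T^n)=ER^n_{2n-k+1}\circ T^{2n-k+1}$, after which (3) applied $n-k+1$ times finishes. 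Item (9) is an induction on $n$, not a one-shot outside composition.

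The serious gap is your treatment of (10)--(12), which you call ``mopping-up''. For (10) you propose a ``rotation-conjugate of the Cond.\ exp.\ template'', but there is no rotation among the basic templates, so this move is unavailable. The left expectation $EL^n_n$ caps on the \emph{left}, and the only primitive that reaches the left side of the picture is the Shift template. The paper's route is: use (9) to get $SH_n^{n+2}\circ T^n\Rightarrow T^{n+2}$, sandwich this between two specific Temperley--Lieb $(n+2)$-tangles (legitimate by (8) and (6)), then apply $ER^n_{n+2}$ and remove the resulting loop. Without this Shift-based manoeuvre there is no way to deduce (10) from the listed templates. Items (11) and (12) for $n<k$ are handled by analogous sandwich-then-cap arguments, and the case $n=0_-$ of (12) is genuinely delicate: the paper devotes three figures to it, going up to colour $4k$ via iterated shifts before collapsing back down. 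Your plan does not even distinguish these small-colour cases from the $n\geq k$ cases already covered by (4) and (6), so as written it would stall here.
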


\begin{proof} (1) According to the depth template  $1^{k+1} \Rightarrow
T^{k+1}$. Applying $ER_{k+1}^k$ on both sides yields
$ER_{k+1}^k \circ 1^{k+1} \Rightarrow ER_{k+1}^k \circ T^{k+1} = M_{k,k}^k$.
Since $ER_{k+1}^k \circ 1^{k+1}$ is $1^k$ with a contractible  loop on the right,
we may remove this loop by Lemma~\ref{remove} and conclude that
$1^k \Rightarrow I_k^k$.\\
(2) Since $1^{k+1} \Rightarrow T^{k+1}$ and $I_k^{k+1}
\Rightarrow I_k^{k+1}$ we may apply the multiplication tangle
$M_{k+1,k+1}^{k+1}$ to the outside to get
$$M_{k+1,k+1}^{k+1} \circ_{(D_1,D_2)} (1^{k+1},I_k^{k+1}) \Rightarrow
M_{k+1,k+1}^{k+1} \circ_{(D_1,D_2)} (T^{k+1},I_k^{k+1}).$$
This may also be written as $I_k^{k+1} \Rightarrow T^{k+1} \circ_{D_2} M^k_{k,k}$. Since $M^k_{k,k} \Rightarrow I^k_k$, 
we have $T^{k+1} \circ_{D_2} M^k_{k,k} \Rightarrow T^{k+1} \circ_{D_2} I^k_k = T^{k+1}.$ Now appeal to transitivity.\\
(3) Suppose that $n < k$. Then $ER^n_{n+1} \circ T^{n+1} = T^n$, so the
asserted result is clear by reflexivity. If $n \geq k$, there are two cases
depending on the parity of $n-k$. 
These cases are shown on the left in Figure \ref{ericircn}.
%
%
\begin{figure}[!h]
\begin{center}
\psfrag{1}{\large $D_1$}
\psfrag{2}{\large $D_2$}
\psfrag{l}{\large $n$}
\psfrag{lmkp1}{\large $D_{n-k+2}$}
\psfrag{lmk}{\large $D_{n-k+1}$}
\psfrag{tp1}{\large $D_{t+1}$}
\psfrag{tp2}{\large $D_{t+2}$}
\psfrag{t}{\large $D_t$}
\psfrag{2t}{\huge $n-k=2t$}
\psfrag{2t-1}{\huge $n-k=2t-1$}
\psfrag{2t+1}{\huge $n-k=2t+1$}
\psfrag{k-1}{\large $k-1$}
\psfrag{k-l}{\large $k-n$}
\psfrag{v}{\huge $\vdots$}
\psfrag{=}{\huge $=$}
\resizebox{12cm}{!}{\includegraphics{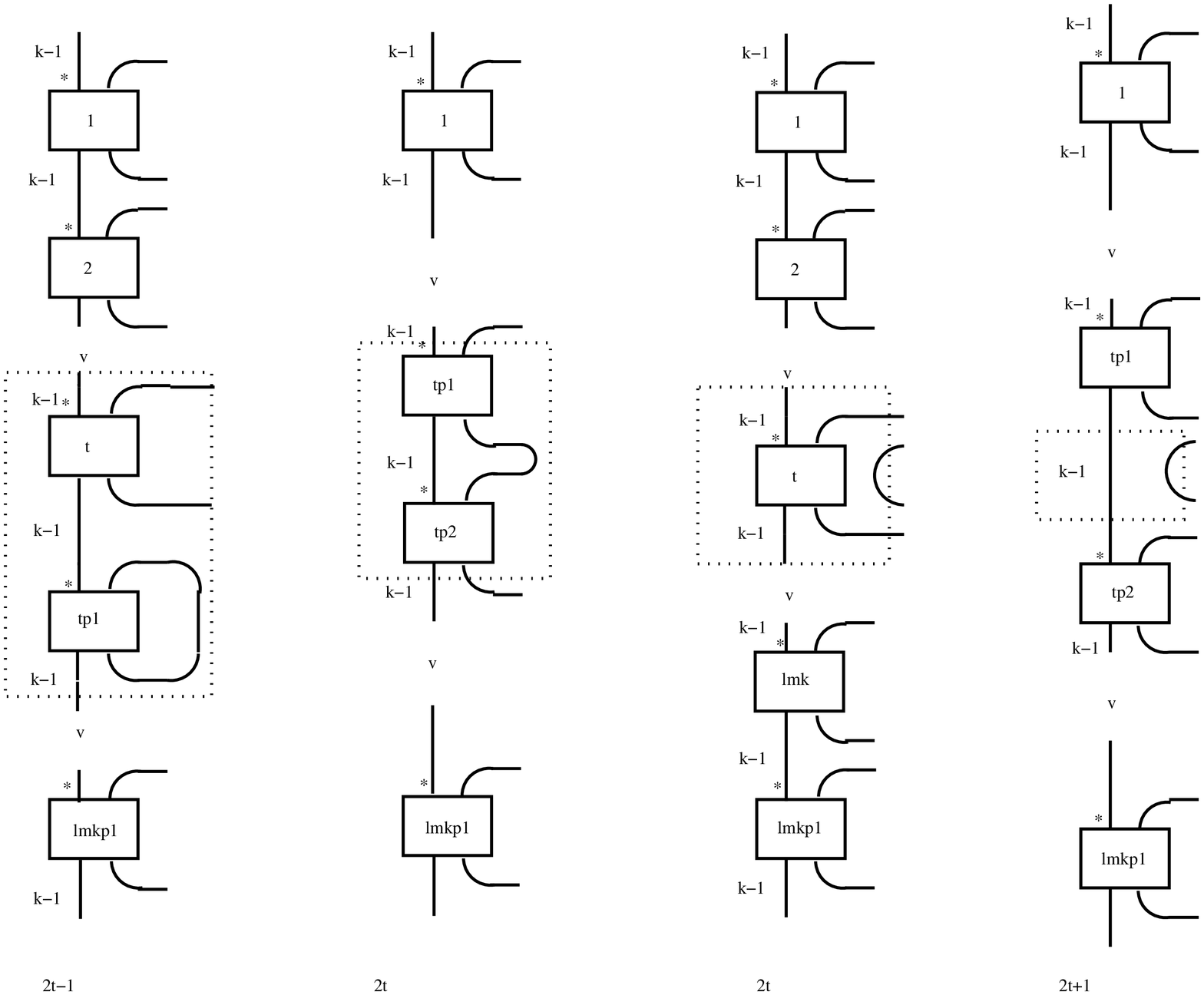}}
\end{center}
\caption{$ER^n_{n+1} \circ T^{n+1}$ and $I^{n+1}_n \circ T^{n}$}
\label{ericircn}
\end{figure}


We see that each is obtained by inserting a $k$-tangle into a box
of $T^n$ and using the multiplication and conditional expectation
templates, this $k$-tangle, in each case, implies $I^k_k$.
\\
(4) Again, there are two cases according to the parity of $n-k$ 
which are shown on the  right in Figure \ref{ericircn}.
If $n-k=2t$, we see that $I^{n+1}_n \circ T^{n} = W \circ I_k^{k+1}$
for a suitable tangle $W$ (where $W$ has a $k+1$-box indicated by the dotted line and the rest of it looking like $T^n$). Note now that the
inclusion template gives $I_k^{k+1} \Rightarrow T^{k+1}$ and therefore $W \circ I_k^{k+1} \Rightarrow W \circ T^{k+1}$. It remains only to note that $W \circ T^{k+1} = T^{n+1}$ and use transitivity to complete the
proof in this case.
The  case $n-k=2t+1$ is even easier. Here $I^{n+1}_n \circ T^{n} = T^{n+1} \circ_{D_{t+2}}1^k$. Since $1^k \Rightarrow I^k_k$, we get
$I^{n+1}_n \circ T^{n} = T^{n+1} \circ_{D_{t+2}}1^k \Rightarrow T^{n+1} \circ_{D_{t+2}}I^k_k = T^{n+1}$.\\
(5)  We have by reflexivity that $I^k_k \Rightarrow T^k$.
Applying (4) inductively shows that for all $n \geq k$, $I_k^n \Rightarrow T^n$. A similar proof beginning with (1) shows that $1^n \Rightarrow T^n$.\\
(6) For $n=k$, this is just the multiplication template. For $n > k$, a
little doodling should convince the reader that $M^n_{n,n} \circ_{(D_1,D_2)}(T^{n}, T^n) = ER^n_{2n-k+1} \circ T^{2n-k+1}$.
Transitivity, (3) and induction finish the proof. \\
(7) Begin with the identity template $1^k \Rightarrow I^k_k$
and apply $ER^{0_\pm}_k$ to both sides to get $ER^{0_\pm}_k \circ 1^k \Rightarrow ER^{0_\pm}_k \circ I^k_k = ER^{0_\pm}_k = T^{0_\pm}$. The left side of this implication is a $0^\pm$-tangle which is a collection of loops which may be removed by Lemma \ref{remove} to yield $1^{0_\pm} \Rightarrow T^{0_\pm}$. 
A very similar proof beginning with the Jones projection templates
gives $E^n \Rightarrow T^n$ for $2 \leq n \leq k$.
To show that $E^n \Rightarrow T^n$ for $n > k$, consider the
following chain of implications. 
\begin{eqnarray*}
 E^n &=& ER_{2n-k-1}^n \circ M_{n-1,n-1}^{2n-k-1} \circ_{(D_1,D_2)}(1^{n-1},1^{n-1})\\
&\Rightarrow& ER_{2n-k-1}^n \circ M_{n-1,n-1}^{2n-k-1} \circ_{(D_1,D_2)}(T^{n-1},T^{n-1})\\
&=& ER_{2n-k-1}^n \circ T^{2n-k-1}\\
&\Rightarrow& T^n,
\end{eqnarray*}
where the first implication is a consequence of (5) and the second, of (3) and induction.\\
(8) This is an easy corollary of (4), (6) and (7).\\
(9) Induce on $n$, with the basis case being asserted by the
shift template. For $n > k$, 
\begin{eqnarray*}
 SH_n^{n+2} \circ T^n &=& M^{n+2}_{n+1,k+2} \circ_{(D_1,D_2)} (SH_{n-1}^{n+1} \circ T^{n-1},SH_k^{k+2})\\
&\Rightarrow& M^{n+2}_{n+1,k+2} \circ_{(D_1,D_2)} (T^{n+1},T^{k+2})\\
&\Rightarrow& T^{n+2},
\end{eqnarray*}
where the last implication uses the multiplication and conditional
expectation templates together with a suitable outside composition.\\
(10) First suppose that $n \geq k$.
Begin with the conclusion $SH_n^{n+2} \circ T^n \Rightarrow T^{n+2}$ in (9). Let $Q^{n+2}$ and $Q^{*n+2}$ be the Temperley-Lieb tangles  shown in Figure \ref{tangleq},
\begin{figure}[!h]
\begin{center}
\psfrag{n}{\large $n$}
\psfrag{n-1}{\large $n-1$}
\psfrag{k-n}{\large $k-n$}
\resizebox{10cm}{!}{\includegraphics{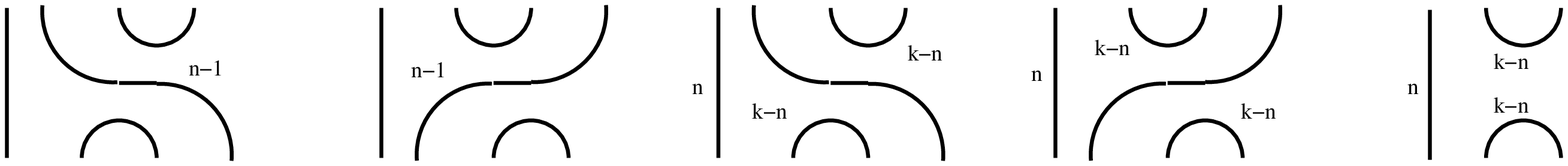}}
\end{center}
\caption{The tangles $Q^{n+2}$, $Q^{*n+2}$, $K^{2k-n+1}$, $K^{*2k-n+1}$and $L^{2k-n}$}
\label{tangleq}
\end{figure}
so that, by (8), $Q^{n+2} \Rightarrow T^{n+2}$ and $Q^{*n+2} \Rightarrow T^{n+2}$ . Then, 
%
%
%
%
%
with $M = M^{n+2}_{n+2,n+2,n+2}$ denoting the iterated 
multiplication tangle we have,
$$M \circ (Q^{n+2},SH_n^{n+2} \circ T^n,Q^{*n+2})
\Rightarrow M \circ(T^{n+2},T^{n+2},T^{n+2}) \Rightarrow T^{n+2}.$$ 
(For typographical convenience, we have omitted the subscripts to
$\circ$). Hence $
ER_{n+2}^n \circ M \circ (Q^{n+2},SH_n^{n+2},Q^{*n+2})
 \Rightarrow 
 ER_{n+2}^n \circ T^{n+2}
 \Rightarrow
  T^n.
$
The left hand side of this chain of implications is $EL^n_n \circ T^n$ with a loop at its right; therefore, using Lemma \ref{remove}, we get the desired result.
For
$1 \leq n < k$, merely apply $ER^n_k$ to both sides of $EL_k^k \circ T^k \Rightarrow T^{k}$. \\
(11) In view of (4), we only need consider the case $n < k$. If $n=0_-$, this is just the case $n=1$ of (10). So suppose that $0 \leq n < k$. Let $t = 2k-n+1$.
Start with $I_k^{t} \Rightarrow T^{t}$ deduced inductively from (4). Let $K^{t}$ and $K^{*t}$ be the Temperley-Lieb tangles in Figure \ref{tangleq} so that by (8), $K^{t} \Rightarrow T^{t}$ and $K^{*t} \Rightarrow T^{t}$.
Now, with $M = M^{t}_{t,t,t}$,
$M \circ
(K^{t},I_k^{t},K^{*t})
\Rightarrow M \circ
(T^{t},T^{t},T^{t}) \Rightarrow T^{t}.
$
Applying $ER_{t}^{n+1}$ to both sides of this and removing the
$k-n$ loops that arise on the left hand side, we get the desired
conclusion using (3).\\
(12) In view of (4), we may assume that $0 \leq n < k$. Let $u = 2k-n$ and $M = M_{u,u,u,u,u}^{u}$.
Then, with $L^{u}$ as in Figure \ref{tangleq}, $$M \circ (L^{u},I^{u}_k,L^{u},I^{u}_k,L^{u}) 
\Rightarrow M \circ (T^{u},T^{u},T^{u},T^{u},T^{u}) \Rightarrow T^{u}.$$ As in (11), applying $ER_{u}^n$ to both
sides and removing the $k-n$ loops gives the desired conclusion.
The case $n=0_-$ seems to be surprisingly complicated.
Begin with $I_k^{2k} \Rightarrow T^{2k}$ deduced inductively using (4). Applying $SH_{2k}^{4k}$ (with the obvious meaning)
on both sides and using (9) repeatedly gives $SH_{2k}^{4k} \circ I_k^{2k} \Rightarrow T^{4k}$. This is shown pictorially
in Figure \ref{shhh}.
\begin{figure}[!h]
\begin{center}
\psfrag{k}{\large $k$}
\psfrag{i}{\huge $\Rightarrow \ \ T^{4k}$}
\resizebox{3cm}{!}{\includegraphics{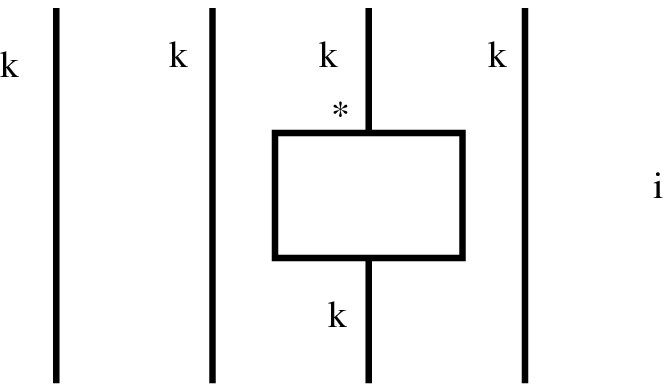}}
\end{center}
\caption{}
\label{shhh}
\end{figure}
Multiplying by appropriate Temperley-Lieb tangles above and below and using (8) and (6), we get the template on the left of Figure \ref{shhh2}. Applying $ER_{4k}^{2k}$  to both sides and removing the $k$
loops that arise and then applying $I_{2k}^{2k+2}$ to both sides gives
the template in the middle in Figure \ref{shhh2}.
\begin{figure}[!h]
\begin{center}
\psfrag{k}{\large $k$}
\psfrag{i}{\huge $\Rightarrow \ \ T^{4k}$}
\psfrag{j}{\huge $\Rightarrow \ \ T^{2k+2}$}
\resizebox{10.5cm}{!}{\includegraphics{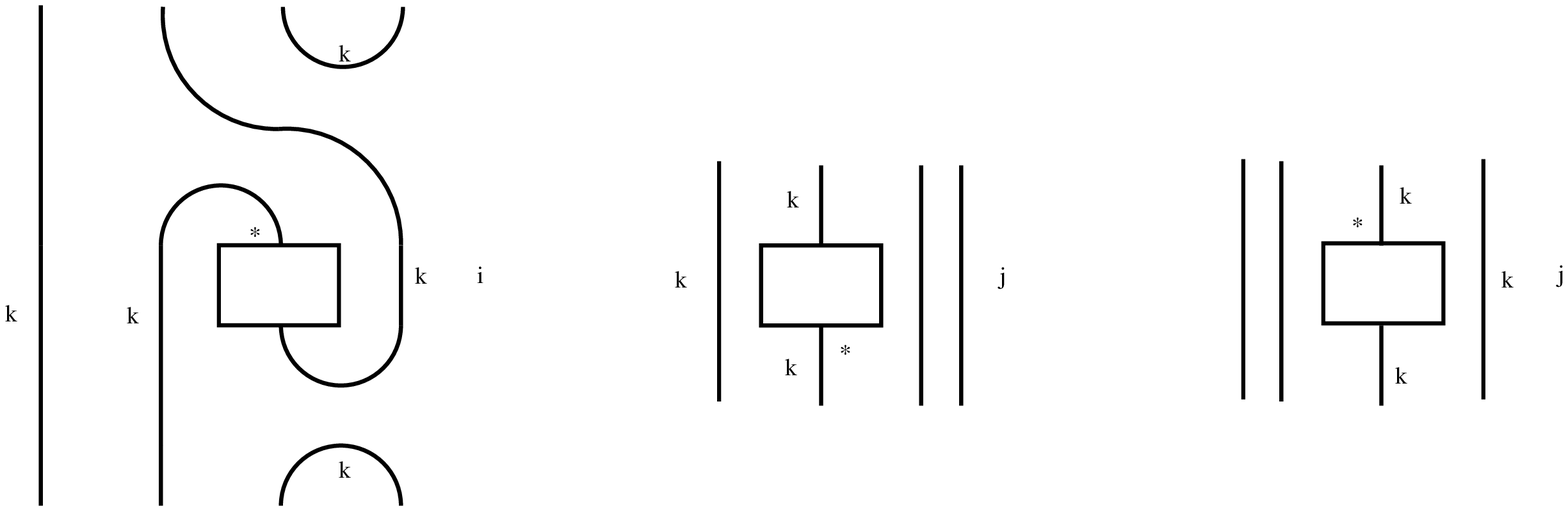}}
\end{center}
\caption{}
\label{shhh2}
\end{figure}
A much easier proof shows that $I^{2k+2}_{k+2} \circ SH^{k+2}_k \Rightarrow T^{2k+2}$ is a consequence of the basic templates which is the right side template in Figure \ref{shhh2}. Now multiplying by
appropriate Temperley-Lieb tangles above, in-between and below, \begin{figure}[!h]
\begin{center}
\psfrag{k-1}{\large $k-1$}
\psfrag{i}{\huge $\Rightarrow \ \ T^{2k+2}$}
\psfrag{j}{\huge $\Rightarrow \ \ T^{2k+2}$}
\resizebox{3cm}{!}{\includegraphics{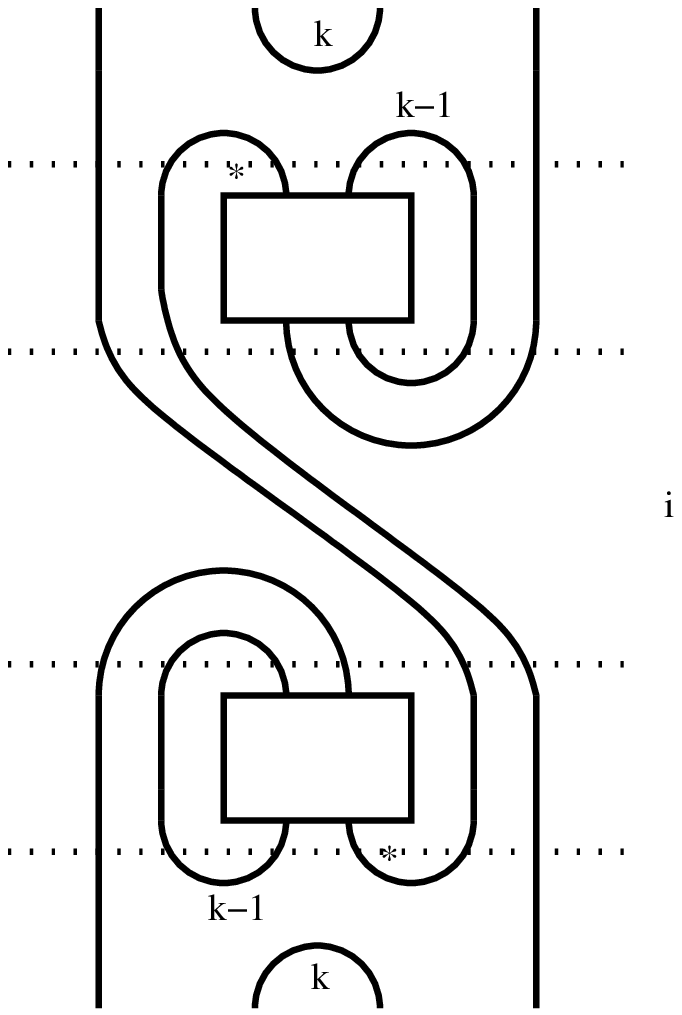}}
\end{center}
\caption{}
\label{shhh3}
\end{figure}
and using (6)
we get the template of  Figure~\ref{shhh3}.
Finally, applying $ER_{2k+2}^{0_-}$ to this template, removing the $k$ loops and using (3) repeatedly yields the desired
result.
\end{proof}

\section{The main theorem}

Let $P$ be a planar algebra and $B \subseteq P$, i.e.,  $B = \coprod_{n \in Col} B_n$ where $B_n \subseteq P_n$ for all $n \in Col$.
Given the pair $(P,B)$,
each $(n_0;n_1,\cdots,n_b)$-tangle $T$ then determines a certain
subspace $R_{(P,B)}(T) \subseteq P_{n_0}$ defined to be (i) the span
of all $Z^P_T(x_1 \otimes \cdots \otimes x_b)$ for $x_i \in B_{n_i}$ if $b>0$ or (ii) the span of 
$Z^P_T(1)$ if $b=0$.
%
%
A template $S \Rightarrow T$ is said to hold for 
the pair $(P,B)$ 
if
$R_{(P,B)}(S) \subseteq R_{(P,B)}(T)$. 
It is easy to see that if a set
of templates holds for $(P,B)$ then so do all their consequences.
%
 Our first observation is fairly
easy to see.

\begin{proposition}
If $P$ is a subfactor planar algebra of finite
depth at most $k$, 
and $B= B_k$ is a basis of $P_k$, then
all the templates of Figure \ref{templates}
hold for $(P,B)$.
\end{proposition}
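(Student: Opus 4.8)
The plan is to verify each of the basic templates of Figure \ref{templates} one at a time for the pair $(P,B)$, where $B=B_k$ is a basis of $P_k$, keeping in mind that $P$ is a subfactor planar algebra of depth at most $k$. For a template $S \Rightarrow T$ we must check that $R_{(P,B)}(S) \subseteq R_{(P,B)}(T)$, that is, that every element obtained by filling the input boxes of $S$ with basis elements from the appropriate $B_{n_i}$ lies in the span of the analogous elements obtained from $T$. Most of these are immediate from the structure of a subfactor planar algebra and the fact that $B$ is a basis of $P_k$.

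First I would dispose of the easy templates. The \emph{Modulus} template $C^{0_\pm} \Leftrightarrow 1^{0_\pm}$ holds because $P$ has modulus $\delta$, so a closed loop contributes the scalar $\delta \neq 0$; hence $R_{(P,B)}(C^{0_\pm}) = \C\cdot\delta = \C = R_{(P,B)}(1^{0_\pm})$ and equality holds both ways. The \emph{Identity} template, the \emph{Inclusion} template $I_{0_-}^1 \Rightarrow \cdots$ and the \emph{Identity} $I_n^n$-type template hold trivially or with one line since, for $n \leq k$, $I_k^k$ applied to a basis of $P_k$ yields all of $P_k$, and any tangle landing in $P_n$ with $n<k$ factors through $P_k$ by depth considerations so its image is contained in the relevant span. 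The \emph{Multiplication} template $M_{k,k}^k \Rightarrow I_k^k$ holds because the image $R_{(P,B)}(M_{k,k}^k)$ is spanned by products $xy$ with $x,y \in B_k$, hence lies in $P_k = R_{(P,B)}(I_k^k)$. Similarly the \emph{Jones projection} template $I_n^k \circ E^n \Rightarrow I_k^k$ and the \emph{Conditional expectation} template $I_{k-1}^k \circ E_k^{k-1} \Rightarrow I_k^k$ hold because the left-hand sides produce elements of $P_k$, which is exactly the span $R_{(P,B)}(I_k^k)$.

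The substantive content is in the two templates involving $T^{k+1}$ and $T^{k+2}$, namely \emph{Depth}: $1^{k+1} \Rightarrow T^{k+1}$, and \emph{Shift}: $SH_k^{k+2} \Rightarrow T^{k+2}$. For the Depth template, recall from the proof of Proposition \ref{tensor} that $Z_{T^{k+1}}^P$ is surjective onto $P_{k+1}$ precisely because $1_{k+1} \in P_kE_{k+1}P_k$ — which holds here since $P$ has depth at most $k$. Thus $R_{(P,B)}(T^{k+1})$, the span of $Z_{T^{k+1}}^P(x_1 \otimes x_2)$ over $x_1,x_2 \in B_k$, is all of $P_{k+1}$ (using that $B_k$ spans $P_k$ and that the two input boxes of $T^{k+1}$, both of colour $k$, realize the factorization $P_kE_{k+1}P_k$), which certainly contains $R_{(P,B)}(1^{k+1}) = \C\cdot 1_{k+1}$. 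For the Shift template, the left-hand side $SH_k^{k+2}$ has a single $k$-box, and filling it with basis elements of $P_k$ produces a spanning set of the image $Z_{SH_k^{k+2}}^P(P_k) \subseteq P_{k+2}$; one then checks, via the same expansion $P_{k+2} = P_kE_{k+1}E_{k+2}P_kE_{k+1}P_k$ (obtained by iterating $1_{k+1}\in P_kE_{k+1}P_k$ as in Proposition \ref{tensor}), that this image is contained in $R_{(P,B)}(T^{k+2})$, the span over basis elements of the three $k$-boxes of $T^{k+2}$.

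The main obstacle I anticipate is the Shift template: one has to see clearly that the particular element produced by the shift tangle on a basis of $P_k$ — which pictorially is a single $P_k$-labelled box straddled by Jones-projection-like strands up to colour $k+2$ — actually lands inside the span of the images of $T^{k+2}$, and this requires a short pictorial argument relating $SH$ to a composition of inclusion, Jones projection and multiplication tangles applied to $T^n$'s, of exactly the flavour used in parts (9)--(12) of Theorem \ref{consequences}. Once the correct picture is drawn, it reduces to the surjectivity-type statements already established, so the verification is routine but the bookkeeping of shadings and $*$-positions is where care is needed. I would finish by remarking that, since all the basic templates hold for $(P,B)$, so do all their consequences, in particular all twelve items of Theorem \ref{consequences}.
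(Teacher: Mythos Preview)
Your proposal is correct and follows essentially the same approach as the paper: the modulus templates hold by non-zero modulus, the templates whose right side is $I_k^k$ hold because $B$ spans $P_k$, and the depth and shift templates hold because $Z_{T^{k+1}}^P$ and $Z_{T^{k+2}}^P$ are surjective (Proposition~\ref{tensor}).

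The only comment is that you are overcomplicating the Shift template. You correctly note the expansion $P_{k+2} = P_kE_{k+1}E_{k+2}P_kE_{k+1}P_k$, which is exactly the statement that $Z_{T^{k+2}}^P$ surjects onto $P_{k+2}$; hence $R_{(P,B)}(T^{k+2}) = P_{k+2}$, and the containment $R_{(P,B)}(SH_k^{k+2}) \subseteq R_{(P,B)}(T^{k+2})$ is then trivial for the same reason as in the Depth case. No pictorial argument relating $SH$ to compositions of other tangles is needed here --- that kind of work belongs to Theorem~\ref{consequences}, not to this proposition.
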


\begin{proof} The modulus templates hold for $(P,B)$ since $P$ has non-zero modulus.
The Jones projections, multiplication and the
conditional expectation templates hold for $(P,B)$ since
their right sides are all the identity tangle $I^k_k$ and $B$ is a basis of $P_k$. The depth and shift templates hold
since the tangles on their right, namely $T^{k+1}$ and $T^{k+2}$, surject onto their ranges (from the proof of Proposition \ref{tensor}).
\end{proof}


%

%


Before proceeding we briefly recall (see \cite{KdyLndSnd2003} for detailed explanations) what a presentation of a planar
algebra is. Given a label set $L = \coprod_{n \in Col} L_n$, there
is a universal planar algebra on $L$, denoted by $P(L)$. By definition,   for all $n \in Col$, $P(L)_n$ is the vector
space with basis all $L$-labelled $n$-tangles. Any subset
$R \subseteq P(L)$ generates a planar ideal $I(R)$ in $P(L)$
and the corresponding quotient planar algebra is denoted $P(L,R)$.

Let $P$ be a subfactor planar algebra of depth at most $k$ and $B$ be a
basis of $P_k$.
For $b \geq 0$, let $B^{\times b}$ be the Cartesian product of $b$
copies of $B$ for $b>0$ and to be $\{1\}$ for $b=0$.

Let $L = \coprod_{n \in Col} L_n$ where the only non-empty $L_n$ 
is $L_k = B$.
Consider the universal planar algebra $P(L)$.
The templates of Figure \ref{templates} specify a subset $R \subseteq P(L)$ as follows.
Fix one of the templates, say $S \Rightarrow T$, where
$S$ has $b$ internal boxes and $T$ has $c$ internal boxes.
Note that the colour of the internal boxes (if any) of each of $S$ and $T$ is $k$.
For  $(x_1,\cdots,x_b) \in B^{\times b}$
write 
$$Z_S^P(x_1 \otimes \cdots \otimes x_b) = \sum_{\{(y_1,\cdots,y_c) \in B^{\times c}\}} \lambda^{(y_1,\cdots,y_c)}Z_T^P(y_1 \otimes \cdots \otimes y_c),$$
for (not necessarily unique) $\lambda^{(y_1,\cdots,y_c)} \in {\mathbb C}$ (with the obvious interpretations if $b$ or $c$ is 0).
This can be done since $S \Rightarrow T$ holds for $(P,B)$. Now consider the following element of $P(L)$ :
$$
S(x_1,\cdots,x_b) - \sum_{\{(y_1,\cdots,y_c) \in B^{\times c}\}} \lambda^{(y_1,\cdots,y_c)}T(y_1,\cdots,y_c),$$
where $S(x_1,\cdots,x_b)$ denotes the tangle $S$ with boxes
labelled $x_1,\cdots,x_b$ etc.
Consider the collection consisting of one such element of $P(L)$ for
each $(x_1,\cdots,x_b) \in B^{\times b}$ and take the union of these
collections over all templates $S \Rightarrow T$ of Figure \ref{templates}.
This (clearly finite) subset of $P(L)$ is what we will call $R$. Note that $R$
is not a uniquely determined set but depends on choices.
We will call this a set of relations determined by the templates of Figure \ref{templates}.

\begin{Theorem}\label{main}
Let $P$ be a subfactor planar algebra 
of finite depth at most $k$. Let $B$ be a fixed basis of $P_k$. Consider
the labelling set $L = \coprod_{n \in Col} L_n$
where the only non-empty $L_n$ is $L_k = B$. Let $R$ be any set
of relations in $P(L)$ determined by the templates in Figure \ref{templates}. Then,
the quotient planar algebra 
$P(L,R) \cong P$.
\end{Theorem}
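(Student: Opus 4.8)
The plan is to construct the natural candidate for the isomorphism, verify it is a well-defined surjective morphism of planar algebras using the fact that the relations $R$ hold in $P$, and then establish injectivity by producing a spanning set of $P(L,R)_n$ of the right size (namely $\dim P_n$) for every colour $n$. Since the only labels live in colour $k$ and the generating templates are precisely those of Figure \ref{templates}, the ``all of $P$ is visible from $P_k$'' phenomenon of Section 3 is exactly what is needed to control the dimensions in $P(L,R)$.

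First I would define a morphism $\Phi: P(L) \to P$ by the universal property: send each label $x \in B = L_k$ to the corresponding element $x \in P_k$. By construction $\Phi$ annihilates every generator of $R$ — each such generator is of the form $S(x_1,\dots,x_b) - \sum \lambda^{(\cdots)} T(y_1,\dots,y_c)$ with $Z_S^P(x_1\otimes\cdots) = \sum \lambda^{(\cdots)} Z_T^P(y_1\otimes\cdots)$ in $P$ — so $\Phi$ descends to $\bar\Phi: P(L,R) \to P$. Surjectivity of $\bar\Phi$ is immediate in colour $k$ (the $x$'s are a basis of $P_k$), and then in all colours $n \geq k$ by Corollary \ref{cor:tensor} (every $P_n$ with $n \geq k$ is spanned by products of images of $P_k$, realised by the multiplication tangles), and for $n < k$ by applying right-expectation tangles $ER^n_k$; alternatively one invokes that $T^n$ surjects for all $n$ together with the consequences of Theorem \ref{consequences}. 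So $\dim P(L,R)_n \geq \dim P_n$ for all $n$.

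The heart of the proof — and the main obstacle — is the reverse inequality $\dim P(L,R)_n \leq \dim P_n$. Here I would argue that in $P(L,R)$ every $L$-labelled $n$-tangle is equal to an element of the image of $Z_{T^n}$, i.e.\ $R_{(P(L,R),\bar B)}(T^n) = P(L,R)_n$ where $\bar B$ is the image of $B$. Granting this, a spanning set of $P(L,R)_n$ for $n \geq k$ is indexed by $(n+1-k)$-tuples of basis elements of $P_k$ fed into $T^n$, and for $n < k$ by single basis elements fed into $T^n = ER^n_k$; in either case the count matches $\dim P_n$ via Corollary \ref{cor:tensor} (combined with the injectivity half of Proposition \ref{tensor} applied inside $P$, transported through $\bar\Phi$), giving $\dim P(L,R)_n \leq \dim P_n$ and hence equality, so $\bar\Phi$ is an isomorphism. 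To prove the claim that labelled tangles reduce to the image of $T^n$, I would use Theorem \ref{consequences}: an arbitrary $L$-labelled $n$-tangle can be cut into elementary pieces (Temperley--Lieb tangles, inclusions, multiplications, expectations, shifts, and the label-boxes themselves), and the consequences (1)--(12) say precisely that each such piece, composed into the outside, sends $T^{(\cdot)}$ to $T^{(\cdot)}$; a single label box of colour $k$ contributes an $I^k_k = T^k$. An induction on the number of boxes and strands of the tangle, pushing everything into a single application of $T^n$, then finishes it. The delicate point to get right is the bookkeeping of this reduction — ensuring that every way a labelled tangle can be assembled is covered by some composition of the consequences in Theorem \ref{consequences}, and that the resulting spanning set really has cardinality at most $\dim P_n$ rather than merely being finite; this is where one leans on the bimodule isomorphism of Corollary \ref{cor:tensor} to pin down the exact count.
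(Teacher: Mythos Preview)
Your strategy is the paper's strategy: build $\bar\Phi:P(L,R)\to P$, show it is onto, show that $P(L,R)_n$ coincides with the span $Q_n$ of the $T^n$-outputs, and then pin down the dimension. Two places, though, are handled more cleanly in the paper and your write-up is a bit loose there.

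First, for the reduction ``every labelled $n$-tangle lies in the $T^n$-span'', you propose an induction on boxes and strands. That can be made to work, but it is precisely the content of the generating-tangle theorem of \cite{KdySnd2004}: a family of tangles closed under composition and containing $1^{0_\pm}$, $E^n$, $ER_{n+1}^n$, $M_{n,n}^n$, $I_n^{n+1}$, $EL_n^n$ contains all tangles. The paper simply checks, via consequences (3),(7),(10),(11),(12) of Theorem~\ref{consequences}, that the set $\{T:T\circ(T^{n_1},\dots,T^{n_b})\Rightarrow T^{n_0}\ \text{holds for}\ (P(L,R),B)\}$ satisfies those hypotheses. Your induction would essentially be a reproof of that theorem; citing it is tidier.

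Second, and more substantively, your dimension count is not quite right as phrased. The spanning set you produce from $T^n$ has $|B|^{\,n-k+1}$ elements for $n\geq k$, which is far larger than $\dim P_n$, so ``the count matches $\dim P_n$'' is not literally true, and you cannot get $\dim P(L,R)_n\leq\dim P_n$ by transporting a statement about $P$ backwards along $\bar\Phi$. The paper's move is to apply Corollary~\ref{cor:tensor} \emph{to $P(L,R)$ itself}: the depth template holds in $(P(L,R),B)$ by construction of $R$, so $1_{k+1}\in P(L,R)_kE_{k+1}P(L,R)_k$ and hence
\[
P(L,R)_k\otimes_{P(L,R)_{k-1}}\cdots\otimes_{P(L,R)_{k-1}}P(L,R)_k\ \cong\ P(L,R)_n.
\]
Only after you have already established $P(L,R)_k\cong P_k$ and $P(L,R)_{k-1}\cong P_{k-1}$ (these follow from $Q_k=P(L,R)_k$ together with, say, the modulus relation making $I_{k-1}^k$ split) can you identify the left-hand side with $P_k\otimes_{P_{k-1}}\cdots\otimes_{P_{k-1}}P_k\cong P_n$ and conclude. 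So the key correction is: apply Proposition~\ref{tensor}/Corollary~\ref{cor:tensor} directly in $P(L,R)$, not in $P$.
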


\begin
{proof}
Consider the natural surjective planar algebra morphism from the
universal planar algebra $P(L)$ to $P$ defined uniquely by
taking a labelled $k$-box to itself regarded as an element of $P$.
Equivalently, under this morphism, for any tangle $S$ all of whose
internal boxes are of colour $k$, $S(x_1,\cdots,x_b) \mapsto Z^P_S(x_1 \otimes \cdots \otimes x_b)$.
Since the relations $R$ were chosen to hold in $P$, this morphism
factors through the quotient planar algebra $P(L,R)$ thus yielding a
surjective planar algebra morphism $P(L,R) \rightarrow P$. We wish
to see that this is an isomorphism.

For $n \in Col$, let 
$Q_n$ be the subspace of $P(L,R)_n$ spanned by all
$Z^{P(L,R)}_{T^n}(x_1 \otimes x_2 \otimes \cdots \otimes x_{n-k+1})$ for $x_1,\cdots,x_{n-k+1} \in B$, if $n \geq k$ 
or the subspace spanned by all $Z^{P(L,R)}_{T^n}(x)$ for $x \in B$, if
$n < k$. 
Let ${\mathcal T}$ be the set of all $(n_0;n_1,\cdots,n_b)$
tangles $T$ such that
(i) if $b>0$, then $Z_T(Q_{n_1} \otimes \cdots \otimes Q_{n_b}) \subseteq Q_{n_0}$, and
(ii) if $b=0$, then $Z_T(1) \in Q_{n_0}$.
Chasing definitions shows that ${\mathcal T}$ may be equivalently described as the
set of $(n_0; n_1,\cdots,n_b)$-tangles $T$ for which
$T \circ_{(D_1,\cdots,D_b)}(T^{n_1},\cdots,T^{n_b}) \Rightarrow T^{n_0}$ holds
for $(P(L,R),B)$.
We will show that ${\mathcal T}$ consists of
all tangles, or equivalently, that $Q$ is a planar subalgebra of
$P(L,R)$.

For this, we appeal to the main result of \cite{KdySnd2004} which
states that if ${\mathcal T}$ is a collection of tangles that is closed
under composition (whenever it makes sense) and contains the tangles $1^{0_\pm}$, $E^n$ for $n \geq 2$, $ER_{n+1}^n, M_{n,n}^n,
I_n^{n+1}$ for all $n \in Col$ and $EL^n_n$ for all $n \geq 1$, then
${\mathcal T}$ contains all tangles.

To verify the hypotheses for our ${\mathcal T}$, observe first that
by definition if $T \in {\mathcal T}$ is a $(n_0;n_1,\cdots,n_b)$ tangle
and $S  \in {\mathcal T}$ is any $n_i$-tangle for $i >0$, then, $T\circ_{D_i}S \in {\mathcal T}$. Thus ${\mathcal T}$ is closed under composition.
That the other hypotheses hold for ${\mathcal T}$ follows from the
observation that the templates of Figure \ref{templates} hold for
$(P(L,R),B)$ by construction of $R$ and therefore their consequences (3),(7),(10),(11),(12) of Theorem \ref{consequences} also hold.

It follows that $Q$ is a planar subalgebra of $P(L,R)$. Since it contains all generators of $P(L,R)$, it is the whole of $P(L,R)$.
In particular, $P(L,R)_k$ which maps onto $P_k$ equals $Q_k$
which is spanned by $B$ and so $P(L,R)_k$ maps isomorphically
onto $P_k$. It easily follows that the map $P(L,R)_n \rightarrow P_n$
is an isomorphism for $n \leq k$.

%


For $n \geq k$, observe that Corollary \ref{cor:tensor} applies to
$P(L,R)$ since the depth template holds for $(P(L,R),B)$.
Hence we have an isomorphism of ${P(L,R)_{k-1}}-{P(L,R)_{k-1}}$-bimodules
$$
P(L,R)_k \otimes_{P(L,R)_{k-1}} P(L,R)_k \otimes_{P(L,R)_{k-1}} \cdots \otimes_{P(L,R)_{k-1}} P(L,R)_k \rightarrow P(L,R)_n,
$$
and therefore an isomorphism of ${P_{k-1}}-{P_{k-1}}$-bimodules
$$
P_k \otimes_{P_{k-1}} P_k \otimes_{P_{k-1}} \cdots \otimes_{P_{k-1}} P_k \rightarrow P(L,R)_n.
$$
Since the left side is, by Corollary  \ref{cor:tensor} applied to $P$,
isomorphic to $P_n$ while the right side maps onto $P_n$, it follows
that $P(L,R)_n$ maps isomorphically to $P_n$ also for
all $n \geq k$.
\end{proof}

\section{On single generation}

Rather surprisingly, the fact that finite depth subfactor planar
algebras are singly generated has a simple proof.
%

\begin{proposition}\label{single}
Let $P$ be a subfactor planar algebra of finite depth at most $k$. Then $P$ is
generated by a single $2k$-box.
\end{proposition}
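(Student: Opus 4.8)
The plan is to produce a single $2k$-box whose generated planar subalgebra is all of $P$, and the natural candidate is a well-chosen element of $P_{2k}$ that ``encodes'' a basis of $P_k$. By Corollary \ref{cor:tensor} applied to $P$ (whose hypothesis $1_{k+1} \in P_kE_{k+1}P_k$ is exactly finite depth), we have a $P_{k-1}$-$P_{k-1}$-bimodule isomorphism $P_k \otimes_{P_{k-1}} P_k \cong P_{2k-(k-1)} = P_{k+1}$, and more relevantly, iterating, $P_k \otimes_{P_{k-1}} P_k \otimes_{P_{k-1}} P_k \cong P_{2k}$ only when the colour count matches, so I should instead argue directly: it suffices to exhibit one element $g \in P_{2k}$ such that the planar subalgebra generated by $g$ contains a basis of $P_k$ (sitting inside $P_{2k}$ via some capping tangle), since we already know from Theorem \ref{main}, or more simply from the standard fact, that a finite depth subfactor planar algebra with depth at most $k$ is generated by $P_k$ (indeed $P_{k+1} = P_kE_{k+1}P_k$ and repeated application of Proposition \ref{tensor} shows every $P_n$ for $n \geq k$ is spanned by images of tangles with $k$-box inputs; combined with the conditional expectation tangles $ER$, the $P_k$'s generate everything).

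First I would fix a basis $\{b_1, \dots, b_d\}$ of $P_k$ and choose scalars $c_1, \dots, c_d \in \mathbb{C}$ that are ``separated'' — distinct and, say, with distinct absolute values, or more robustly algebraically independent-looking — and form $g = \sum_{i=1}^d c_i\, Z^P_{F_i}(b_i) \in P_{2k}$, where $F_i$ is a fixed family of $2k$-tangles each with a single internal $k$-box, chosen so that the $Z^P_{F_i}(b_i)$ land in ``orthogonal'' pieces of $P_{2k}$; the cleanest choice is to use the inclusion/shift-type tangles so that $b_i \mapsto b_i \otimes e_{k+1} \otimes \cdots$ under the Corollary \ref{cor:tensor} identification $P_{2k} \cong P_k^{\otimes_{P_{k-1}}(k+1)}$, placing the $d$ basis vectors into $d$ coordinate directions scaled by the $c_i$. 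Then each individual $b_i$ (viewed in $P_{2k}$) can be recovered from $g$ by a planar-algebraic ``spectral projection'': applying suitable multiplication and conditional-expectation tangles to $g$ produces $\sum_i p(c_i) Z^P_{F_i}(b_i)$ for any polynomial $p$, and since the $c_i$ are distinct we can choose $p$ to be the Lagrange interpolator picking out the $j$-th term, so $Z^P_{F_j}(b_j)$ lies in the subalgebra generated by $g$; then a further capping tangle returns $b_j \in P_k$.

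The key steps in order: (1) recall/record that $P$ is generated by $P_k$ as a planar algebra (cite the finite-depth structure and Proposition \ref{tensor}, or Theorem \ref{main}); (2) construct $g \in P_{2k}$ as the scalar combination above using a fixed finite list of tangles and a basis of $P_k$; (3) show the planar subalgebra $\langle g \rangle$ contains each $Z^P_{F_j}(b_j)$ by exhibiting an explicit tangle built from multiplications of copies of $g$ (realising powers $g^{(m)}$ whose effect is $\sum_i c_i^m Z^P_{F_i}(b_i)$) followed by the linear combination coming from Lagrange interpolation — this is where care is needed; (4) cap down to get each $b_j \in \langle g \rangle$, hence $P_k \subseteq \langle g \rangle$, hence $\langle g \rangle = P$.

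The main obstacle is step (3): making precise that the ``multiplication'' tangle structure on $P_{2k}$ interacts with the decomposition $g = \sum_i c_i v_i$ (with $v_i = Z^P_{F_i}(b_i)$) so that iterated products of $g$ genuinely separate the $v_i$ by their scalars. This requires the $v_i$ to be ``multiplicatively independent'' in a suitable sense — the natural way to guarantee this is to arrange the $F_i$ so that the $v_i$ are, under the Corollary \ref{cor:tensor} isomorphism $P_{2k} \cong P_k \otimes_{P_{k-1}} \cdots \otimes_{P_{k-1}} P_k$, of the form $b_i \otimes e \otimes e \otimes \cdots$ with mutually ``commuting idempotent-like'' companions, or alternatively to embed $P_k$ diagonally and use a single auxiliary tangle (a Jones-projection-decorated inclusion) so that $g$ behaves like a block-diagonal matrix $\mathrm{diag}(c_1 b_1, \dots, c_d b_d)$; then polynomials in $g$ under the algebra multiplication of $P_{2k}$ visibly realise $\mathrm{diag}(p(c_1) b_1^{?}, \dots)$ and one finishes by linear algebra and a capping tangle that reads off each $b_i$. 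Verifying that such a tangle configuration exists and does what is claimed — purely pictorially, in the spirit of Section 3 and 4 — is the real content of the argument.
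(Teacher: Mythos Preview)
Your approach has a genuine gap at step (3), as you yourself recognise: arranging the tangles $F_i$ so that the $v_i = Z^P_{F_i}(b_i)$ are ``multiplicatively separable'' in $P_{2k}$ is not at all clear, and the block-diagonal picture you sketch does not obviously survive the tensor-product-over-$P_{k-1}$ identification of Corollary~\ref{cor:tensor} (the tensor factors are over $P_{k-1}$, not over $\mathbb{C}$, so there is no reason for products to act coordinatewise on your decomposition). More fundamentally, you have missed the simplification that reduces the paper's proof to a few lines: since $P_k$ is a finite-dimensional $C^*$-algebra, it is generated \emph{as a $*$-algebra} by a single element $x \in P_k$ (a standard fact about finite direct sums of matrix algebras), and after translating by a scalar one may assume $\tau(x) \neq 0$. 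Hence the planar subalgebra generated by $\{x, x^*\}$ already contains $P_k$ and is therefore all of $P$.

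Once this is in hand, encoding $x$ and $x^*$ in a single $2k$-box is trivial: let $z \in P_{2k}$ be the element obtained by placing $x$ and $x^*$ side by side via the obvious $(2k;k,k)$-tangle. Capping off the $x^*$-half with a trace-type annular tangle returns $\delta^k\tau(x^*)\cdot x$, a nonzero multiple of $x$, and symmetrically one recovers a nonzero multiple of $x^*$. No Lagrange interpolation, no spectral separation of an entire basis, and no delicate tangle combinatorics are needed. The moral is that instead of trying to pack a whole basis of $P_k$ into $P_{2k}$ and then unpack it planar-algebraically, one should first use the $C^*$-structure to reduce to packing only two elements.
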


\begin{proof} As a planar algebra, $P$ is generated by $P_k$. Since
$P_k$ is a finite-dimensional $C^*$-algebra, it is singly generated, by say $x \in P_k$. By adding a multiple of $1_k$ to $x$, we may
assume without loss of generality that $\tau(x) \neq 0$ (recall that
$\tau(\cdot)$ is the normalised picture trace on $P$).
Thus the planar algebra generated by $x$ and $x^*$ contains $P_k$
and must be the whole of $P$.
Now consider the element $z \in P_{2k}$ defined by
\begin{figure}[!h]
\begin{center}
\psfrag{k}{\large $k$}
\psfrag{x}{\huge $x$}
\psfrag{x*}{\huge $x^*$}
\resizebox{3cm}{!}{\includegraphics{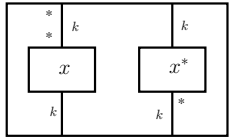}}
\end{center}
\caption{Definition of $z \in P_{2k}$}
\label{generator}
\end{figure}
Figure \ref{generator}. It should be clear that applying suitable annular tangles to $z$ yields non-zero (since $\tau(x) \neq 0$) multiples
of $x$ and $x^*$.  Hence the planar
subalgebra of $P$ generated by $z$ contains both $x$ and $x^*$
and consequently is $P$.
\end{proof}

\begin{remark} Let $d$ be a fixed positive integer. For $n \in Col$, letting $P(d)_n$ be the vector space
spanned by all $n$-tangles whose only internal boxes are of colour $d$, there is an obvious planar algebra structure on $P(d)$. 
What Proposition \ref{single} asserts is that $P(2k)$ maps onto
any subfactor planar algebra of depth $k$.
\end{remark}

It is natural to ask whether when a finite depth planar algebra $P$
is presented as a quotient of $P(2k)$ as above, the kernel is a finitely
generated planar ideal. A standard proof shows that this is indeed so.


\begin{proposition}\label{equiv}
Let $P$ be a planar algebra and suppose that for finite label sets
$L$ and $\tilde{L}$ there are surjective planar algebra maps
$\pi : P(L) \rightarrow P$ and $\tilde{\pi} : P(\tilde{L}) \rightarrow P$.
The ideal $I = ker(\pi)$ is a finitely generated planar ideal of $P(L)$
if and only if $\tilde{I} = ker(\tilde{\pi})$ is a finitely generated planar
ideal of $P(\tilde{L})$.
\end{proposition}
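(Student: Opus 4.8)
The plan is to prove this by a symmetric finiteness argument: it suffices to show that if $I = \ker(\pi)$ is finitely generated as a planar ideal of $P(L)$, then $\tilde I = \ker(\tilde\pi)$ is finitely generated as a planar ideal of $P(\tilde L)$; the converse then follows by interchanging the roles of $L$ and $\tilde L$. The key structural fact I would use is that a finitely generated planar ideal is controlled by finitely many ``levels'': if $I$ is generated as a planar ideal by a finite set $G \subseteq P(L)$, then every element of $G$ lies in some $P(L)_m$, so there is a single integer $N$ with $G \subseteq \coprod_{n \le N} P(L)_n$.

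First I would record the elementary observation that, because $L$ is finite, each $P(L)_n$ is countably (in fact, for the purpose at hand, the relevant subspaces are finitely) generated over the scalars once one fixes the colour, and more importantly that $\pi$ being a surjective planar algebra map means $P(L)_n$ surjects onto $P_n$ for every $n$. The main step is to transport generators across the two presentations. Since $\tilde\pi : P(\tilde L) \to P$ is onto, for each label $\ell \in \tilde L$, say of colour $n_\ell$, I can pick a preimage in $P(L)_{n_\ell}$; more precisely, there is a planar algebra map $\phi : P(\tilde L) \to P(L)$ lifting the identity on $P$ — that is, with $\pi \circ \phi = \tilde\pi$ — because $P(\tilde L)$ is free on $\tilde L$ and I only need to choose, for each generator of $P(\tilde L)$, some element of $P(L)$ mapping to its image under $\tilde\pi$. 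Symmetrically there is $\psi : P(L) \to P(\tilde L)$ with $\tilde\pi \circ \psi = \pi$.

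Now the heart of the argument: I claim $\tilde I$ is generated as a planar ideal by the finite set $\psi(G) \cup \{\,\ell' - \psi(\phi(\ell')) : \ell' \in \tilde L\,\}$, which is finite since $G$ and $\tilde L$ are both finite. For the inclusion ``$\supseteq$'': each $\psi(g)$ with $g \in G$ satisfies $\tilde\pi(\psi(g)) = \pi(g) = 0$, so $\psi(G) \subseteq \tilde I$, and each $\ell' - \psi(\phi(\ell'))$ maps under $\tilde\pi$ to $\tilde\pi(\ell') - \pi(\phi(\ell')) = \tilde\pi(\ell') - \tilde\pi(\ell') = 0$, so these lie in $\tilde I$ too; hence the planar ideal they generate is contained in $\tilde I$. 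For the reverse inclusion, let $J$ denote that planar ideal. Modulo $J$, every generator $\ell'$ of $P(\tilde L)$ is congruent to $\psi(\phi(\ell'))$, and since $P(\tilde L)$ is generated as a planar algebra by $\tilde L$, it follows that $P(\tilde L) = \psi(P(L)) + J$. Now take any $a \in \tilde I$ and write $a = \psi(b) + j$ with $b \in P(L)$, $j \in J$; then $\pi(b) = \tilde\pi(\psi(b)) = \tilde\pi(a) = 0$, so $b \in I$, the planar ideal generated by $G$ in $P(L)$. Applying the planar algebra map $\psi$ to an expression of $b$ as a planar-ideal combination of elements of $G$ shows $\psi(b)$ lies in the planar ideal generated by $\psi(G)$, hence in $J$; therefore $a \in J$, proving $\tilde I \subseteq J$.

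The step I expect to require the most care is the assertion that $\psi$ carries the planar ideal generated by $G$ into the planar ideal generated by $\psi(G)$ — one must unwind the definition of a planar ideal (closure under applying annular, or more generally all, tangles with one designated input box) and check that a planar algebra homomorphism intertwines these operations, which is immediate from the definition of a morphism of algebras over the planar operad but should be stated explicitly. A secondary point worth a sentence is the existence of the lifts $\phi$ and $\psi$: this uses precisely the universal (free) property of $P(L)$ and $P(\tilde L)$ recalled before Theorem \ref{main}, namely that a planar algebra map out of $P(L)$ is determined by, and can be prescribed arbitrarily on, the labels. Everything else is formal diagram-chasing.
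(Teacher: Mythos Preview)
Your proof is correct and follows essentially the same route as the paper's: construct mutually lifting maps between the two universal planar algebras, take as new generators the images of the old generators together with the elements $\ell' - \psi(\phi(\ell'))$ for $\ell' \in \tilde L$, and show these suffice. The only cosmetic difference is that where the paper proves $z - \tilde\phi\phi(z) \in I(R)$ for every $z$ via an explicit telescoping sum, you package the same computation as the statement $P(\tilde L) = \psi(P(L)) + J$; these are equivalent, and your remarks about what needs care (that planar algebra maps send planar ideals to planar ideals, and the universal property giving the lifts) match the points the paper uses implicitly.
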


\begin{proof} First note that universality of $P(L)$ and $P(\tilde{L})$
yield (possibly non-unique) planar algebra maps $\phi : P(L) \rightarrow P(\tilde{L})$ and
$\tilde{\phi} : P(\tilde{L}) \rightarrow P(L)$ that satisfy $\tilde{\pi} \circ \phi = \pi$ and $\pi \circ \tilde{\phi} = \tilde{\pi}$.

By symmetry, it suffices to prove one implication. Suppose that $\tilde{I} = I(\tilde{R})$ for a finite subset $\tilde{R} \subseteq P(\tilde{L})$. Let $R= \tilde{\phi}(\tilde{R}) \cup \{ {x} - \tilde{\phi} {\phi}({x}) : {x} \in {L}\}$, which
is clearly a finite subset of $P({L})$.
We claim that ${I} = I({R})$. 

Clearly ${R} \subseteq {I}$ and so $I({R}) \subseteq {I}$. The other inclusion needs a little work. 
First observe that $\{ {x} - \tilde{\phi} {\phi}({x}) : {x} \in {L}\} \subseteq {R}$ implies that for all ${z} \in P(L)$,
${z} - \tilde{\phi} {\phi}({z}) \in I({R})$. To see this we may reduce easily to the case that ${z} = T({x}_1,\cdots,{x}_b)$ where $T$ is a $(n_0;n_1,\cdots,n_b)$-tangle and ${x}_i \in {L_{n_i}}$. Then
$${z} - \tilde{\phi} {\phi}({z}) = Z_T^{P({L})}({x}_1 \otimes \cdots \otimes {x}_b) - Z_T^{P({L})}(\tilde{\phi} {\phi}({x}_1) \otimes \cdots \otimes \tilde{\phi} {\phi}({x}_b)).$$
This may be expressed as a telescoping sum of $b$ terms indexed
by $k=1,2,\cdots,b$ where the $k^{th}$ term is given by
$$Z_T^{P({L})}(\tilde{\phi} {\phi}({x}_1) \otimes \cdots \otimes \tilde{\phi} {\phi}({x}_{k-1}) \otimes ({x}_k-  \tilde{\phi} {\phi}({x}_{k})) \otimes {x}_{k+1} \otimes \cdots \otimes {x}_b)$$
Each of these terms is clearly in the planar ideal generated by 
$\{ {x} - \tilde{\phi} {\phi}({x}) : {x} \in {L}\}$ and hence in $I({R})$. Therefore ${z} - \tilde{\phi} {\phi}({z}) \in I({R})$.

Say ${z} \in {I}$, so that ${\pi}({z}) = 0$. Then ${\phi}({z}) \in ker(\tilde{\pi}) = \tilde{I} = I(\tilde{R})$, i.e., ${\phi}({z})$ is in the planar
ideal generated by $\tilde{R}$. It follows that $\tilde{\phi} {\phi}({z})$ is in
the planar ideal generated by $\tilde{\phi}(\tilde{R})$ and therefore in $I({R})$. 
Since ${z} - \tilde{\phi} {\phi}({z}) \in I({R})$, 
we also have ${z} \in I({R})$ and 
the proof is finished.
\end{proof}

A direct consequence of Theorem \ref{main} and Propositions \ref{single} and \ref{equiv} is the following corollary.

\begin{Corollary}
If $P$ is a subfactor planar algebra of finite depth at most $k$,
then $P$ is generated by a single $2k$-box subject to finitely many relations.\qed
\end{Corollary}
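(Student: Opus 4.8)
The plan is to deduce the corollary by stitching together the three results just proved. By Theorem~\ref{main}, if $P$ has depth at most $k$ and $B$ is a basis of $P_k$, then with the label set $L$ concentrated in colour $k$ (namely $L_k = B$) and $R$ a set of relations coming from the templates of Figure~\ref{templates}, we have $P \cong P(L,R)$; in particular $P$ is presented as a quotient $\pi : P(L) \to P$ with finitely generated kernel $I = I(R)$ (finiteness of $R$ was noted in its construction). On the other hand, Proposition~\ref{single} gives a single $2k$-box $z$ generating $P$, which amounts to a surjective planar algebra map $\tilde{\pi} : P(\tilde{L}) \to P$ where $\tilde{L}$ is the label set with one generator in colour $2k$ (i.e.\ $\tilde{L}_{2k} = \{z\}$ and $\tilde{L}_n = \emptyset$ otherwise); this is exactly the map $P(2k) \to P$ of the Remark, restricted to a single box.

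The bridge between these two presentations is Proposition~\ref{equiv}. Both $L$ and $\tilde{L}$ are finite label sets, and we have surjections $\pi : P(L) \to P$ and $\tilde{\pi} : P(\tilde{L}) \to P$. Proposition~\ref{equiv} then says that $I = \ker(\pi)$ is a finitely generated planar ideal if and only if $\tilde{I} = \ker(\tilde{\pi})$ is. We have just observed, via Theorem~\ref{main}, that $\ker(\pi) = I(R)$ is finitely generated; hence $\tilde{I} = \ker(\tilde{\pi})$ is a finitely generated planar ideal of $P(\tilde{L})$, say $\tilde{I} = I(\tilde{R})$ for a finite $\tilde{R} \subseteq P(\tilde{L})$. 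Therefore $P \cong P(\tilde{L})/I(\tilde{R}) = P(\tilde{L}, \tilde{R})$, which is precisely the assertion that $P$ is generated by a single $2k$-box (the image of $z$) subject to the finitely many relations $\tilde{R}$.

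There is essentially no obstacle here: the work has all been done in the cited results, and the proof is just bookkeeping to check that their hypotheses line up (finiteness of both label sets, surjectivity of both maps, and the identification of $\ker(\pi)$ with the finitely generated ideal $I(R)$ from Theorem~\ref{main}). The one point worth a sentence of care is that Proposition~\ref{single} is phrased as ``$P$ is generated by a single $2k$-box'' rather than as an explicit surjection $P(\tilde{L}) \to P$; but generation by $z \in P_{2k}$ is, by the universal property of $P(\tilde{L})$ with $\tilde{L}_{2k} = \{z\}$, the same thing as the existence of such a surjection sending the labelled box to $z$. Since this is the only step requiring any thought, I expect the proof to be three or four lines. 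The statement is recorded as Theorem~\ref{main} plus Propositions~\ref{single} and~\ref{equiv}, so one simply cites all three.
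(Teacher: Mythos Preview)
Your proposal is correct and matches the paper's own argument exactly: the corollary is stated as a direct consequence of Theorem~\ref{main} together with Propositions~\ref{single} and~\ref{equiv}, and your write-up simply unpacks how those three results fit together.
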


\section*{Acknowledgments}
We thank V. S. Sunder for useful discussions.

\end{document}